\documentclass[a4paper, 12pt]{amsart}
\usepackage{graphicx} 
\usepackage[T1]{fontenc}
\usepackage[utf8]{inputenc}
\usepackage{newpxtext,newpxmath}
\usepackage{csquotes,microtype,mathtools}
\usepackage[cm]{fullpage}
\usepackage[dvipsnames]{xcolor}
\definecolor{darkblue}{rgb}{0,0,0.7}
\definecolor{darkred}{rgb}{0.7,0,0}
\usepackage{enumitem}
\usepackage{amsmath,amsthm}
\usepackage{tikz,float}
\usepackage{caption}
\usepackage{subcaption}

\usepackage{comment}
\usepackage{float}
\usepackage[colorlinks=true, linkcolor=darkred, citecolor=darkblue, urlcolor=blue, pagebackref=true, breaklinks=true]{hyperref}
\usepackage[nameinlink]{cleveref}

\usetikzlibrary{positioning, shapes, fit, arrows}
\usepackage{tkz-graph}
\tikzstyle{vertex}=[circle, draw, inner sep=0pt, minimum size=6pt]

\newtheorem{theorem}{Theorem}[section]
\newtheorem{proposition}{Proposition}[section]
\newtheorem{lemma}{Lemma}[section]
\newtheorem{corollary}{Corollary}[section]

\theoremstyle{definition}
\newtheorem{definition}{Definition}[section]
\newtheorem{example}{Example}[section]

\theoremstyle{remark}
\newtheorem{remark}{Remark}[section]
\newtheorem{problem}{Problem}[section]
\newcommand{\lnkate}{\mathbin{\!\!/\!\!/\!}} %
\newcommand{\lk}{{\mathrm{lk}}}
\newcommand{\del}{{\mathrm{del}}}

\newcommand{\lfcg}{G^{\mathrm{lf}}_D}
\newcommand{\tcg}{G^{\mathbf{t}}_D}
\newcommand{\lfch}{\mathcal{H}^{\mathrm{lf}}_D}
\newcommand{\tch}{\mathcal{H}^{\mathbf{t}}_D}

\def\lk{\mathrm{lk}}
\def\del{\mathrm{del}}
\def\Ind{\mathrm{Ind}}
\def\Dlf{\mathrm{Dlf}}
\def\DT{\mathrm{DT}}

\def\H{\mathcal{H}}

\title{Vertex decomposable complexes of directed forests, conflict graphs and chordality}

\thanks{Last updated: \today}

\author{Priyavrat Deshpande}
\address{Chennai Mathematical Institute, India}
\email{pdeshpande@cmi.ac.in}

\author{Rutuja Sawant}
\address{Chennai Mathematical Institute, India}
\email{rutuja@cmi.ac.in}
\date{}
\thanks{The authors are partially supported by a grant from the Infosys Foundation}
\subjclass[2020]{Primary 05E45; Secondary 05C20, 05C76, 55U10}

\keywords{
Directed graphs,
Directed forests,
Independence complex,
Vertex decomposable complex,
Shellability,
Cohen-Macaulay complex,
Simplicial complexes of graphs
}

\begin{document}

\begin{abstract}
Let $D$ be a multidigraph. We study the simplicial complex $\mathrm{Dlf}(D)$, whose vertices are the directed edges of $D$ and whose faces correspond to directed linear forests, that is, vertex-disjoint unions of directed paths. We also consider the related directed tree complex $\mathrm{DT}(D)$.

Our main approach is to associate with $D$ a simple graph encoding the local incompatibilities among the edges of $D$. Under mild acyclicity assumptions, we show that $\mathrm{Dlf}(D)$ and $\mathrm{DT}(D)$ can be realized as the independence complexes of respective graphs. 
This correspondence allows us to apply structural results from the theory of independence
complexes to obtain graph-theoretic criteria guaranteeing vertex decomposability, shellability, and sequential Cohen-Macaulayness of these complexes.

In particular, we describe explicit forbidden induced directed subgraphs that obstruct vertex decomposability, and we identify classes of multidigraphs---including certain acyclic multidigraphs and multidigraphs whose underlying graphs are forests or cycles—for which $\mathrm{Dlf}(D)$ and $\mathrm{DT}(D)$ are vertex decomposable. We also provide examples showing that these properties do not hold in general.
\end{abstract}

\maketitle

\section{Introduction}

Simplicial complexes arising from graphs and directed graphs have been a central theme in
combinatorial topology over the past two decades. Classical examples include independence
complexes, matching complexes, and complexes associated with forests or trees, which
provide a rich interaction between graph theory, topology, and combinatorial commutative
algebra. A particularly fruitful direction has been the study of complexes of directed trees
and forests, initiated by Kozlov \cite{kozlov99} and further developed by Engstr\"om \cite{Engstrom05}, Joji\'c\cite{Ducko12, Dusko13}, Marietti and Testa \cite{marietti08}, among others,
where homotopy, shellability, vertex decomposability, and Cohen-Macaulayness are investigated using structural properties of the underlying graphs.

In this paper, we study a closely related, but more restrictive object. Given a multidigraph
$D$, we consider the simplicial complex whose vertices are the directed edges of $D$ and
whose faces consist of those sets of edges that form a \emph{directed linear forest}, i.e.,
a vertex-disjoint union of directed paths. We call this complex the \emph{directed linear
forest complex} of $D$ and denote it by $\mathrm{Dlf}(D)$. A related complex, obtained by
allowing arbitrary directed forests, is the \emph{directed tree complex} $\mathrm{DT}(D)$.

Complexes defined in terms of collections of directed paths have appeared implicitly in
recent work of Caputi, Collari, Smith, and Di~Trani \cite{CaputiCollariSmith2026, CaputiCollariTraniSmith2022, CaputiCollariDiTrani2022, CaputiCollariDiTrani2024} under the name of \emph{multipath complexes},
arising from the study of path posets and multipath cohomology of directed graphs.
In that setting, the simplices correspond to a vertex-disjoint union of directed paths, exactly as in
$\mathrm{Dlf}(D)$. However, the emphasis there is on homological constructions, functoriality,
and algebraic structures associated with path posets. In contrast, our focus is purely
combinatorial-topological: we study $\mathrm{Dlf}(D)$ and $\mathrm{DT}(D)$ directly as simplicial
complexes and investigate when they satisfy strong combinatorial properties such as vertex
decomposability, shellability, and (sequential) Cohen-Macaulayness.

Our main idea is to translate these directed-forest complexes into the independence complexes of
ordinary graphs. To each multidigraph $D$, we associate a simple graph whose vertices
correspond to the edges of $D$, and where adjacency encodes precisely the local obstructions
to being part of a directed linear forest (or directed forest). Under mild acyclicity
assumptions on $D$, we prove that
\[
\mathrm{Dlf}(D) = \mathrm{Ind}(\lfcg)
\quad\text{and}\quad
\mathrm{DT}(D) = \mathrm{Ind}(\tcg).
\]
This reduction allows us to import powerful structural
results from the theory of independence complexes, in particular theorems of Woodroofe \cite{Woodroofe09} and others on vertex decomposable graphs and forbidden induced cycles.

Using this correspondence, we obtain graph-theoretic criteria on $D$ that guarantee vertex
decomposability of $\mathrm{Dlf}(D)$ and $\mathrm{DT}(D)$. These criteria are expressed in terms of explicit forbidden induced directed subgraphs, providing a transparent
combinatorial interpretation of topological properties. We also present families of
multidigraphs for which the directed linear forest complex fails to be vertex decomposable or even sequentially Cohen-Macaulay, showing that the favorable behavior does not hold in
general.

The contribution of this work is therefore twofold. First, it provides a direct and elementary combinatorial framework for studying complexes of directed linear forests and
directed trees, complementary to the path-poset and multipath cohomology approach. Second, it connects these complexes systematically with independence complexes, allowing known
results on chordal graphs and forbidden induced cycles to be translated into the directed
graph setting.

The paper is organized as follows. In \Cref{sec:prel} we recall basic notions on simplicial complexes and multidigraphs, and describe link and deletion operations in the directed linear forest and directed tree complexes. 
In \Cref{sec:DLF} we associate with a multidigraph $D$
a simple graph $\lfcg$ and show that $\mathrm{Dlf}(D)$ is an independence complex. 
We then derive vertex decomposability results via forbidden induced subgraphs. 
\Cref{sec:DT} treats the directed tree complex $\mathrm{DT}(D)$ using similar methods.
In \Cref{sec:hyperg}, we introduce conflict hypergraphs associated with a multidigraph, whose hyperedges encode minimal forbidden configurations, including directed cycles of arbitrary length. This allows us to realize the directed linear forest and directed tree complexes as independence complexes of hypergraphs for arbitrary multidigraphs, without acyclicity assumptions.
Finally, in \Cref{lastsection} we suggest open future research directions and open problems. 

\section{Preliminaries}\label{sec:prel}

In this section, we recall basic notions from simplicial complexes and multidigraph theory used throughout this paper. 
We first review simplicial complexes and their combinatorial properties, then introduce multidigraphs and their associated simplicial complexes: directed linear forest complexes and directed tree complexes.

\subsection{Simplicial complexes}

We first recall standard definitions related to simplicial complexes.

An (abstract) simplicial complex $\Delta$ on a finite vertex set $V$ is a collection of subsets of $V$ such that if $\tau \in \Delta$ and $\sigma \subseteq \tau$, then $\sigma \in \Delta$. The elements of $\Delta$ are its \emph{faces}, and the inclusion-maximal faces are its \emph{facets}. For a face $\sigma \in \Delta$, its \emph{dimension} is $\dim \sigma = |\sigma| - 1$, and the dimension of $\Delta$ is $\dim \Delta = \max \{\dim \sigma : \sigma \in \Delta \}$. A subcomplex $\Delta'$ of $\Delta$ is a simplicial complex such that if $\sigma$ is a face of $\Delta'$, then $\sigma$ is a face of $\Delta$. The \emph{empty complex} $\{\emptyset\}$ consists only of the empty set and has dimension $-1$. The \emph{void complex} $\emptyset$ has no faces.
Let $\sigma$ be a face of $\Delta$, then by $\Delta[\sigma]$ we denote the subcomplex induced by the vertices of $\sigma$ inside $\Delta$. 

A simplicial complex $\Delta$ is \emph{pure} if all its facets have the same dimension; otherwise it is \emph{non-pure}. For a nonnegative integer $d$, the \emph{$d$-skeleton} of $\Delta$ is the subcomplex of faces of dimension at most $d$. The \emph{pure $d$-skeleton} $\Delta^{[d]}$ is the subcomplex whose facets are exactly the $d$-dimensional faces of $\Delta$. A \emph{$d$-simplex} is a simplicial complex with a single facet of dimension $d$.

For simplicial complexes $\Delta_1$ and $\Delta_2$ on disjoint vertex sets
$V(\Delta_1)$ and $V(\Delta_2)$, their \emph{join} $\Delta_1 * \Delta_2$ has
 vertex set
$V(\Delta_1 * \Delta_2) = V(\Delta_1) \sqcup V(\Delta_2)$ and face set
\[
\Delta_1 * \Delta_2
= \{ \sigma_1 \sqcup \sigma_2 \mid \sigma_1 \in \Delta_1,\ \sigma_2 \in \Delta_2 \}.
\]

Two basic constructions for a simplicial complex $\Delta$ are the link and the deletion. For a face $\sigma$ of $\Delta$, the \emph{link} of $\sigma$ in $\Delta$ is
\[
\lk_{\Delta}(\sigma)
= \{ \tau \in \Delta \mid \tau \cap \sigma = \emptyset,\ \tau \cup \sigma \in \Delta \},
\]
and the \emph{deletion} of $\sigma$ in $\Delta$ is
\[
\del_{\Delta}(\sigma)
= \{ \tau \in \Delta \mid \tau \cap \sigma = \emptyset \}.
\]
If $\sigma = \{v\}$ is a vertex, we write $\lk_{\Delta}(v)$ and $\del_{\Delta}(v)$.

\medskip
We now recall some key combinatorial and topological properties of simplicial complexes.

\begin{definition}\label{def:VD}
A simplicial complex $\Delta$ is \emph{vertex decomposable} if either $\Delta$ is a
simplex (including $\emptyset$ and $\{\emptyset\}$), or there exists a vertex $v$ of $\Delta$ such that both
$\lk_{\Delta}(v)$ and $\del_{\Delta}(v)$ are vertex decomposable, and
every facet of $\del_{\Delta}(v)$ is also a facet of $\Delta$. A vertex $v$ for which every facet of $\del_{\Delta}(v)$ is a facet of $\Delta$ is called a \emph{shedding vertex}. 
\end{definition}

\noindent
We now recall shellable and Cohen-Macaulay simplicial complexes.

\begin{definition}\label{def:shellable}
A simplicial complex $\Delta$ is \emph{shellable} if its facets
$\sigma_1, \ldots, \sigma_s$ admits an ordering such that for each $2 \le i \le s$, the
subcomplex
\[
\langle \sigma_i \rangle \cap \langle \sigma_1, \ldots, \sigma_{i-1} \rangle
\]
is pure of dimension $\dim(\sigma_i)-1$. Such an ordering is a
\emph{shelling order}.
\end{definition}

It is a classical result—see \cite[Theorem 3.35]{JJ08} and \cite[Theorem 12.3]{kozlov2007}—that any shellable simplicial complex $\Delta$ is homotopy equivalent to a wedge of spheres. Moreover, the wedge contains a sphere of dimension $d$ if and only if $\Delta$ has facets of dimension $d$; no such facets, no $d$–sphere in the wedge.

\begin{definition}\label{def:CM}
A simplicial complex $\Delta$ is \emph{Cohen-Macaulay} over a field $\mathbb{K}$ if
\[
\widetilde{H}_i(\mathrm{lk}_{\Delta}(\sigma);\mathbb{K}) = 0
\]
for all $\sigma \in \Delta$ and all $i < \dim \mathrm{lk}_{\Delta}(\sigma)$.
\end{definition}

A non-pure extension of this notion is given by sequential Cohen-Macaulayness.

\begin{definition}\label{def:SCM}
A simplicial complex $\Delta$ is \emph{sequentially Cohen-Macaulay} over a field
$\mathbb{K}$ if for every $d \ge 0$ its pure $d$-skeleton $\Delta^{[d]}$ is Cohen-Macaulay over
$\mathbb{K}$.
\end{definition}

For simplicial complexes, the following implications hold
(see \cite[Theorem 3.33]{JJ08}):
\begin{equation}\label{eq:1}
\text{Vertex decomposable}
\;\Rightarrow\;
\text{Shellable}
\;\Rightarrow\;
\text{Sequentially Cohen-Macaulay}.
\end{equation}

\medskip
\subsection{Multidigraphs}

A \emph{multidigraph} $D$ consists of a pair of finite sets $(V(D),E(D))$, where $V(D)$ is the set of vertices and $E(D)$ is the set of edges, together with two functions
\[
s,t : E(D) \to V(D).
\]
For each edge $e \in E(D)$, the vertex $s(e)$ is called the \emph{source} of $e$ and $t(e)$ is called its \emph{target}. The edge $e$ is oriented from $s(e)$ to $t(e)$ and is written as $\overrightarrow{s(e)t(e)}$. A pair of vertices $\{u,v\} \subseteq V(D)$ is said to be adjacent if either $\overrightarrow{uv}\in E(D)$ or $\overrightarrow{vu}\in E(D)$.

\medskip
Two different edges $e,f \in E(D)$ are called \emph{parallel} if they have the same source and the same target, that is, if $s(e)=s(f)$ and $t(e)=t(f)$. They are said to form a \emph{$2$-cycle} if the source of each is the target of the other, i.e.\ if $s(e)=t(f)$ and $s(f)=t(e)$.

\medskip
A multidigraph $D$ is called a \emph{digraph} if it has no parallel
edges; that is, for any two distinct edges $e,f \in E(D)$, the ordered pairs satisfy
\[
(s(e),t(e)) \neq (s(f),t(f)).
\]
A digraph $D$ is called a \emph{simple digraph} if it has neither
parallel edges nor $2$-cycles; equivalently, for any two distinct
vertices $u \neq v$, at most one of $\overrightarrow{uv}$ and
$\overrightarrow{vu}$ lies in $E(D)$. A (simple, undirected) \emph{graph} can be regarded as a simple digraph whose edges are undirected.
\begin{definition}\label{def: cong digraphs}
Two multidigraphs $D_1=(V(D_1),E(D_1))$ and $D_2=(V(D_2),E(D_2))$ are said to be \emph{isomorphic} if there exist bijections
\[
\phi_V : V(D_1) \to V(D_2)
\quad \text{and} \quad
\phi_E : E(D_1) \to E(D_2)
\]
such that for every edge $e \in E(D_1)$ we have
\[
s(\phi_E(e))=\phi_V(s(e))
\quad\text{and}\quad
t(\phi_E(e))=\phi_V(t(e)).
\]

\noindent
In this situation, we write $D_1 \cong D_2$.
\end{definition}

\medskip
For a multidigraph $D$, its \emph{underlying graph}, denoted $D^{\mathrm{un}}$, is the simple undirected graph with vertex set $V(D)$ in which two vertices are adjacent whenever there is at least one edge of $D$ between them (see \Cref{fig: underlying graph}). A multidigraph $D$ is said to be connected if its underlying graph is connected; equivalently, for any two vertices $u,v \in V(D)$, there exists a path (ignoring edge directions) between $u$ and $v$.

A multidigraph $D'$ is called a subgraph of multidigraph $D$ if $V(D') \subseteq V(D)$ and $E(D') \subseteq E(D)$ with the source and target maps of $D'$ inherited from those of $D$. For a subset $U \subseteq V(D)$, the \emph{induced subgraph} of $D$ on $U$, denoted $D[U]$, is the multidigraph with
vertex set $U$ and edge set
\[
E(D[U]) = \{ e \in E(D) \mid s(e),t(e) \in U \},
\]
again with source and target inherited from $D$. More generally, for a subset $A \subseteq E(D)$, the \emph{subgraph of $D$ induced by $A$}, denoted $D[A]$, is the multidigraph with edge set $A$ and vertex set given by all endpoints of edges in $A$. 

For a vertex $v \in V(D)$, the \emph{in-degree} and \emph{out-degree} of $v$ in $D$ are given by
\[
d^{-}(v) = |\{ e \in E(D) : t(e)=v \}|, 
\qquad
d^{+}(v) = |\{ e \in E(D) : s(e)=v \}|.
\]

A \emph{directed cycle} is a connected digraph in which each vertex has in-degree and out-degree both equal to $1$. The length of a directed cycle is defined to be $|V(D)|$. A \emph{directed path} is obtained from a directed cycle by removing exactly one edge. A \emph{directed tree} is a connected digraph with no directed cycles in which no two distinct edges share the same target. A \emph{directed forest} is a digraph satisfying the same conditions as a directed tree, except that it is not required to be connected.

We now introduce several operations on multidigraphs that will be central to our description of links and deletions in the corresponding simplicial complexes.

\begin{definition}\label{lnkgph}
To a given multidigraph $D=(V(D), E(D))$ and an edge $e \in E(D)$, we associate two contraction-like subgraphs that we call \emph{link graphs}. 
First, let $D\lnkate e$ be the multidigraph obtained from $D$ by first deleting every edge whose source is $s(e)$, every edge whose target is $t(e)$, and those edge(s) whose unordered pair of endpoints is $\{s(e),t(e)\}$.  
Then identifying isolated vertices $s(e)$ and $t(e)$(see \Cref{diag:contraction1}) such that incidents remain unchanged.

Similarly, let $D_{\downarrow e}$ be the multidigraph formed from $D$ by first deleting all edges with target $t(e)$ or unordered pair of endpoints $\{s(e),t(e)\}$, and then identifying vertices $s(e)$ and $t(e)$ (see \Cref{diag:contraction2}).    
\end{definition}

Finally, let $D-e$ denote the multidigraph obtained from $D$ by removing the edge $e$ (see \Cref{diag:contraction3}).

\begin{figure}[H]
\centering
\begin{subfigure}{0.48\textwidth}
    \centering
    \begin{tikzpicture}
        \node (a) at (0,0) {$1$};
        \node (c) at (0,-2) {$2$};
        \node (b) at (2,0) {$4$};
        \node (d) at (2,-2) {$3$};

        \draw[->] (a)--(b) node[midway,above] {$e$};
        \draw[->] (-0.1,-0.25) -- (-0.1,-1.75);
        \draw[->] (b)--(c);
        \draw[->] (a)--(c);
        \draw[->] (b)--(d);
        \draw[->] (c)--(d);
        \draw[->] (2.1,-1.7) -- (2.1,-0.2);
    \end{tikzpicture}
    \caption{Multidigraph $D$}
    \label{diag:multidigraph}
\end{subfigure}
\hfill
\begin{subfigure}{0.48\textwidth}
    \centering
    \vspace{0.4cm}
    \begin{tikzpicture}
        \node (a) at (0,0) {$1$};
        \node (c) at (0,-2) {$2$};
        \node (b) at (2,0) {$4$};
        \node (d) at (2,-2) {$3$};

        \draw[-] (a)--(c);
        \draw[-] (a)--(b);
        \draw[-] (b)--(d);
        \draw[-] (c)--(d);
        \draw[-] (b)--(c);
    \end{tikzpicture}
    \caption{Underlying graph - $D^{\mathrm{un}}$}
    \label{diag:underlying-graph}
\end{subfigure}

\caption{}
\label{fig: underlying graph}
\end{figure}

\begin{figure}[H]
\centering
\begin{subfigure}{0.3\textwidth}
    \centering
    \begin{tikzpicture}
        \node (c) at (0,-2) {$2$};
        \node (b) at (2,0) {$1$};
        \node (d) at (2,-2) {$3$};

        \draw[->] (b)--(c);
        \draw[->] (b)--(d);
        \draw[->] (c)--(d);
    \end{tikzpicture}
    \caption{$D\lnkate e$}
    \label{diag:contraction1}
\end{subfigure}
\hfill
\begin{subfigure}{0.3\textwidth}
    \centering
    \vspace{0.4cm}
    \begin{tikzpicture}
        \node (c) at (0,-2) {$2$};
        \node (b) at (1,0) {$1$};
        \node (d) at (2,-2) {$3$};

        \draw[->] (b)--(c);
        \draw[->] (0.8,-0.2)--(-0.03,-1.8);
        \draw[->] (0.7,-0.2)--(-0.15,-1.8);
        \draw[->] (b)--(d);
        \draw[->] (c)--(d);
    \end{tikzpicture}
    \caption{$D_{\downarrow e}$}
    \label{diag:contraction2}
\end{subfigure}
\hfill
\begin{subfigure}{0.3\textwidth}
    \centering
    \begin{tikzpicture}
        \node (a) at (0,0) {$1$};
        \node (c) at (0,-2) {$2$};
        \node (b) at (2,0) {$4$};
        \node (d) at (2,-2) {$3$};

        \draw[->] (-0.1,-0.25) -- (-0.1,-1.75);
        \draw[->] (b)--(c);
        \draw[->] (a)--(c);
        \draw[->] (b)--(d);
        \draw[->] (c)--(d);
        \draw[->] (2.1,-1.7) -- (2.1,-0.2);
    \end{tikzpicture}
    \caption{$D-e$}
    \label{diag:contraction3}
\end{subfigure}

\caption{}
\label{fig:contraction}
\end{figure}

\medskip
\subsection{Associated Complexes}
We now define the simplicial complexes associated with multidigraphs, which are the central objects of this paper.

\begin{definition}\label{def: DLFC}
For a multidigraph $D$, the \emph{directed linear forests complex}, denoted by $\Dlf(D)$, is the simplicial complex on the vertex set $E(D)$ whose faces are the subsets $\sigma \subseteq E(D)$ such that the induced subgraph $D[\sigma]$ is a vertex-disjoint union of directed paths.
\end{definition}

The next lemma explains how taking the link or deletion of a vertex in $\Dlf(D)$ corresponds to operations on the multidigraph $D$.

\begin{lemma}\label{lem:lk and del for Dlf}
For the directed linear forests complex, for every $e \in E(D)$ we have:
\begin{enumerate}
    \item $\mathrm{lk}_{\Dlf(D)}(e) = \Dlf(D\lnkate e)$,
    \item $\mathrm{del}_{\Dlf(D)}(e) = \Dlf(D-e)$.
\end{enumerate}
\end{lemma}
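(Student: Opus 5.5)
The deletion statement (2) is immediate, so the work is in the link statement (1). For (2), a set $\sigma \subseteq E(D)$ with $e \notin \sigma$ spans the same subgraph $D[\sigma]$ whether we regard it inside $D$ or inside $D-e$. Hence $\sigma \in \del_{\Dlf(D)}(e)$ if and only if $\sigma \in \Dlf(D-e)$.

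For (1), I will first record a local characterization of faces. A set $\sigma \subseteq E(D)$ is a face of $\Dlf(D)$ if and only if three conditions hold in $D[\sigma]$:
\begin{enumerate}
    \item every vertex has in-degree at most $1$;
    \item every vertex has out-degree at most $1$;
    \item $D[\sigma]$ contains no directed cycle, including $2$-cycles.
\end{enumerate}
The forward direction is clear from the definition of a directed path. Conversely, each connected component satisfying these conditions is a directed path or a directed cycle, and condition (3) rules out cycles. Parallel edges are automatically excluded by condition (2), since they share a source. With this characterization, both sides of (1) can be compared by checking degrees and cycles vertex by vertex.

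Write $u=s(e)$ and $w=t(e)$, and let $x$ be the vertex of $D\lnkate e$ obtained by identifying $u$ and $w$. By construction, $E(D\lnkate e)$ is exactly the set of edges $f \neq e$ such that $s(f)\neq u$, $t(f)\neq w$, and $f$ does not join $u$ and $w$. Endpoints of such edges are kept, except that $u$ and $w$ are renamed $x$.

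I now take $\sigma$ with $e\notin\sigma$ and show that $\sigma\cup\{e\}\in\Dlf(D)$ if and only if $\sigma\subseteq E(D\lnkate e)$ and $\sigma\in\Dlf(D\lnkate e)$.
\begin{enumerate}
    \item[(a)] Suppose $\sigma\cup\{e\}$ is a face. Then $\sigma$ contains no other edge out of $u$ and no other edge into $w$, by the degree conditions. It also contains no edge between $u$ and $w$: a parallel edge would violate the out-degree condition, and a reverse edge would form a $2$-cycle with $e$. Hence $\sigma\subseteq E(D\lnkate e)$.
    \item[(b)] For such $\sigma$, the in-degree of $x$ in the image of $\sigma$ equals the in-degree of $u$ in $\sigma$, because $w$ receives nothing. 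Likewise, the out-degree of $x$ equals the out-degree of $w$ in $\sigma$, because $u$ emits nothing. These match the in-degree of $u$ and the out-degree of $w$ in $\sigma\cup\{e\}$. All other vertices are untouched, so the degree conditions agree on both sides.
    \item[(c)] Directed cycles of $\sigma$ in $D\lnkate e$ that avoid $x$ are the same as cycles of $\sigma$ in $D$ avoiding $u$ and $w$. A directed cycle through $x$ must enter $x$ via an edge into $u$ and leave via an edge out of $w$. It therefore corresponds to a directed path from $w$ to $u$ in $D[\sigma]$, which together with $e$ is exactly a directed cycle of $D[\sigma\cup\{e\}]$ through $e$. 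The remaining cycles of $D[\sigma\cup\{e\}]$ are those avoiding $e$; by (a) they cannot pass through $u$ or $w$, so they are again the cycles already matched.
\end{enumerate}

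The main obstacle is step (c), together with the bookkeeping in the identification. I must check that no cycle is created or destroyed, for example by an edge of $\sigma$ that becomes a loop at $x$. This cannot happen, because the only edges with both endpoints in $\{u,w\}$ are precisely those deleted in forming $D\lnkate e$. I will treat this degenerate case explicitly before running the cycle correspondence.
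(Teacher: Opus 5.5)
Your proof is correct and follows the same route as the paper. Both arguments first show that $\sigma\cup\{e\}\in\Dlf(D)$ forces $\sigma\subseteq E(D\lnkate e)$, then compare $D[\sigma\cup\{e\}]$ with $(D\lnkate e)[\sigma]$ across the identification of $s(e)$ and $t(e)$. Your degree count at the merged vertex, and your correspondence between directed cycles through it and directed cycles through $e$ (for instance a $w$-to-$u$ path in $\sigma$), make rigorous what the paper compresses into the remark that the identification ``does not create any new incidences''.
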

\begin{proof}
\emph{(1)} 
Let $\sigma \in \lk_{\Dlf(D)}(e)$, then $\sigma \cup \{e\} \in \Dlf(D)$.
This is equivalent to requiring that no edge in $\sigma$ has the same source, the same target, or both endpoints in common with $e$. 
Because $e \notin \sigma$, this condition is precisely $\sigma \subseteq E(D\lnkate e)$. 
Moreover, identifying $s(e)$ and $t(e)$ to form a single vertex does not create any new incidences among the remaining edges of $D$. 
Hence, $(D\lnkate e)[\sigma]$ is a vertex-disjoint union of directed paths in $D\lnkate e$ if and only if $D[\sigma \cup \{e\}]$ is a vertex-disjoint union of directed paths in $D$. Therefore, $\lk_{\Dlf(D)}(e) = \Dlf(D\lnkate e)$.

\medskip
\noindent
\emph{(2)} Let $\sigma \in \del_{\Dlf(D)}(e)$. By definition, this means $e \notin \sigma$ and $D[\sigma]$ is a vertex-disjoint union of directed paths in $D$. This is equivalent to saying that $(D-e)[\sigma]$ is a vertex-disjoint union of directed paths in $D-e$. Therefore, we obtain $\del_{\Dlf(D)}(e) = \Dlf(D-e)$.
\end{proof}

\medskip
We now turn to a closely related simplicial complex naturally arising from a multidigraph.

\begin{definition}\label{def: DTC}
Given a multidigraph $D$, the \emph{directed tree complex}, denoted by $\DT(D)$, is the simplicial complex with vertex set $E(D)$ whose faces are those subsets $\sigma \subseteq E(D)$ for which the induced subgraph $D[\sigma]$ is a directed forest.
\end{definition}

\noindent
Note that any directed linear forest is, in particular, a directed forest. Hence, for every multidigraph $D$ we have the inclusion

\begin{equation}\label{eq:inclusion}
\Dlf(D) \subseteq \DT(D).
\end{equation}

\medskip
Analogously to \Cref{lem:lk and del for Dlf}, the link and deletion of a vertex $v$ in $\DT(D)$ can be interpreted as operations on the multidigraph $D$ itself.

\begin{lemma}\label{lem:lk and del in DT}
For the directed tree complex and for every $e \in E(D)$, the following hold:
\begin{enumerate}
    \item $\mathrm{lk}_{\DT(D)}(e) = \DT(D_{\downarrow e})$,
    \item $\mathrm{del}_{\DT(D)}(e) = \DT(D-e)$.
\end{enumerate}
\end{lemma}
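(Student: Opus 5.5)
We will prove part (2) first, since it is immediate. A subset $\sigma\subseteq E(D)$ with $e\notin\sigma$ induces the same multidigraph in $D$ as in $D-e$. Hence $\sigma\in\del_{\DT(D)}(e)$ exactly when $\sigma\in\DT(D-e)$, and the argument is word for word the one used for \Cref{lem:lk and del for Dlf}(2).

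For part (1), the plan is to mirror the proof of \Cref{lem:lk and del for Dlf}(1). Let $\sigma\subseteq E(D)\setminus\{e\}$, and write $w$ for the vertex of $D_{\downarrow e}$ obtained by identifying $s(e)$ and $t(e)$. We must show that $D[\sigma\cup\{e\}]$ is a directed forest if and only if $\sigma\subseteq E(D_{\downarrow e})$ and $D_{\downarrow e}[\sigma]$ is a directed forest.

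\emph{Necessity of $\sigma\subseteq E(D_{\downarrow e})$.} Suppose $D[\sigma\cup\{e\}]$ is a directed forest. Then $\sigma$ contains no edge with target $t(e)$, since otherwise $t(e)$ has in-degree $2$. It also contains no edge with endpoint set $\{s(e),t(e)\}$: a parallel edge again gives in-degree $2$ at $t(e)$, and a reversed edge gives a directed $2$-cycle. So $\sigma\subseteq E(D_{\downarrow e})$. In particular, no loops arise at $w$.

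\emph{In-degree condition.} Once $\sigma\subseteq E(D_{\downarrow e})$, we compare in-degrees in $D[\sigma\cup\{e\}]$ and $D_{\downarrow e}[\sigma]$. Every vertex other than $w$ has the same in-degree in both. The in-edges of $w$ in $D_{\downarrow e}[\sigma]$ are exactly the in-edges of $s(e)$ in $\sigma$, because edges of $\sigma$ entering $t(e)$ have been removed. Meanwhile, in $D[\sigma\cup\{e\}]$ the vertex $t(e)$ has the single in-edge $e$. Therefore ``in-degree $\le 1$ everywhere'' holds in one multidigraph if and only if it holds in the other.

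\emph{Directed cycles (main step).} This is the step that needs care: we must match directed cycles in $D[\sigma\cup\{e\}]$ with directed cycles in $D_{\downarrow e}[\sigma]$. A directed cycle $C$ in $D[\sigma\cup\{e\}]$ either avoids both $s(e)$ and $t(e)$, in which case it survives unchanged, or meets them. In the latter case, contracting $e$ (if $e\in C$) or simply identifying the two vertices (if $e\notin C$) yields a closed directed walk through $w$ in $D_{\downarrow e}[\sigma]$. This walk contains a directed cycle, and it is nondegenerate because no edge of $\sigma$ joins $s(e)$ and $t(e)$. Conversely, let $C'$ be a directed cycle in $D_{\downarrow e}[\sigma]$ through $w$. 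We lift it to $D$: it enters $w$ along some edge and leaves along another. If the lifted entry and exit vertices coincide, we get a directed cycle of $D[\sigma]$. If it enters at $s(e)$ and exits at $t(e)$, inserting $e$ gives a directed cycle of $D[\sigma\cup\{e\}]$. It cannot enter at $t(e)$, since those edges were deleted; so the remaining case is entry at $s(e)$ with exit at $s(e)$, which is covered by the first case.

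Combining the three steps yields the equivalence, and hence $\lk_{\DT(D)}(e)=\DT(D_{\downarrow e})$.
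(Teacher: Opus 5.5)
Your proposal is correct and follows the same route as the paper: (2) is immediate, and for (1) you first show that $\sigma\cup\{e\}\in\DT(D)$ forces $\sigma\subseteq E(D_{\downarrow e})$, then argue that identifying $s(e)$ and $t(e)$ preserves the directed-forest property. Your in-degree comparison and explicit lifting of directed cycles through the merged vertex $w$ justify what the paper only asserts in one line (that the identification creates no new obstructions), and you correctly exclude the reversed edge via the $2$-cycle, which the paper's phrase ``same ordered pair of endpoints'' glosses over.
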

\begin{proof}
\emph{(1)} Let $\sigma \in \lk_{\DT(D)}(e)$. By definition, this means that $\sigma \cup \{e\} \in \DT(D)$. This occurs exactly when no edge in $\sigma$ has the same target as $e$, and no edge in $\sigma$ has the same ordered pair of endpoints as $e$. Since $e \notin \sigma$, this condition is equivalent to requiring that $\sigma \subseteq E(D_{\downarrow e})$. Moreover, identifying $s(e)$ and $t(e)$ does not create any additional incidences among the remaining edges of $D$. Consequently, $D_{\downarrow_e}[\sigma]$ is a directed forest if and only if $D[\sigma \cup \{e\}]$ is a directed forest. Hence, $\lk_{\DT(D)}(e) = \DT(D_{\downarrow_e})$.

\medskip
\emph{(2)} Now let $\sigma \in \del_{\DT(D)}(e)$. By definition, this means that $e \notin \sigma$ and that $D[\sigma]$ is a directed forest. This is equivalent to the requirement that $(D - e)[\sigma]$ be a directed forest. Therefore, we obtain $\del_{\DT(D)}(e) = \DT(D - e)$.
\end{proof}
\medskip
Let $D_1=(V_1,E_1)$ and $D_2=(V_2,E_2)$ be multidigraphs with disjoint vertex sets.
Their \emph{disjoint union} $D_1 \sqcup D_2$ is the multidigraph given by
\[
V(D_1 \sqcup D_2) = V_1 \sqcup V_2, \qquad
E(D_1 \sqcup D_2) = E_1 \sqcup E_2,
\]
where the source and target maps on $E_1$ and $E_2$ agree with those of $D_1$ and $D_2$, respectively. In particular, we obtain:
\begin{itemize}
    \item $\Dlf(D_1 \sqcup D_2) = \Dlf(D_1) * \Dlf(D_2)$,
    \item $\DT(D_1 \sqcup D_2) = \DT(D_1) * \DT(D_2)$.
\end{itemize}

\medskip
We finish this subsection with a simple example.

\begin{lemma}\label{lem: directed cycle}
Let $D$ be a directed cycle on $n$ vertices. Then the complex of directed linear forests and the complex of directed trees agree. Furthermore, this complex is vertex decomposable.
\end{lemma}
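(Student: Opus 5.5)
Let $D$ be the directed cycle with vertices $v_1,\dots,v_n$ and edges $e_i=\overrightarrow{v_iv_{i+1}}$, indices mod $n$. For $n=1$ the single edge is a loop, and I would treat that degenerate case separately. For $n=2$ the two edges form a $2$-cycle.

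\textbf{Equality of the two complexes.} Every vertex of $D$ has in-degree and out-degree $1$. Hence for any $\sigma\subseteq E(D)$ the subgraph $D[\sigma]$ also has all in- and out-degrees at most $1$. So no two edges of $\sigma$ share a target or a source. A digraph with these degree bounds is a directed forest exactly when it has no directed cycle, and in that case it is automatically a disjoint union of directed paths. The only directed cycle contained in $D$ is $D$ itself. Therefore both $\Dlf(D)$ and $\DT(D)$ consist of all proper subsets of $E(D)$. That is, both equal the boundary $\partial\Delta^{n-1}$ of the $(n-1)$-simplex on $E(D)$. (For $n=1$ the loop is itself a directed cycle, so both complexes equal $\{\emptyset\}$.)

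\textbf{Vertex decomposability.} It then suffices to show that $\partial\Delta^{n-1}$ is vertex decomposable. I would prove this by induction on $n$, using \Cref{def:VD} directly rather than the link/deletion graphs of \Cref{lem:lk and del for Dlf}.

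For $n=1$ the complex $\{\emptyset\}$ is a simplex by convention. For $n\ge 2$, pick any vertex $e$. Its link is $\partial\Delta^{n-2}$ on the remaining $n-1$ vertices, which is vertex decomposable by induction. Its deletion is the full simplex on the remaining $n-1$ vertices, which is vertex decomposable. The only facet of the deletion is $E(D)\setminus\{e\}$, which is a facet of $\partial\Delta^{n-1}$, so $e$ is a shedding vertex.

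As a consistency check, one can instead use \Cref{lem:lk and del for Dlf}. Here $D-e$ is a directed path, whose complex is a full simplex. Also $D\lnkate e$ is a directed path on $n-2$ edges, obtained by deleting the two edges adjacent to $e$ and then identifying the endpoints of $e$. The two descriptions match for $n\ge3$, but for small $n$ the edge bookkeeping differs, which is why I prefer the abstract simplex argument.

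\textbf{Main obstacle.} The only real subtlety is the small cases $n=1,2$, where loops and $2$-cycles are involved. One has to check that "directed path" and "directed forest" exclude the whole cycle but allow every proper subset. For $n=2$ this gives two isolated vertices, i.e.\ $\partial\Delta^1$, which is vertex decomposable, with both link and deletion being simplices.
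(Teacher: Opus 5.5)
Your argument is correct and is essentially the paper's. You identify both complexes with $\partial\Delta^{n-1}$ using that every vertex of $D$ has in- and out-degree one and that $D$ is its own unique directed cycle. You then verify vertex decomposability of the boundary of a simplex by an explicit shedding-vertex induction, where the paper instead cites Provan--Billera.

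Drop your side ``consistency check'', because it is mistaken. In a directed cycle, the only edge with source $s(e)$ or target $t(e)$ is $e$ itself, so for $n\ge 3$ the graph $D\lnkate e$ removes only $e$. Identifying $s(e)$ with $t(e)$ then closes the remaining path into a directed $(n-1)$-cycle, whose complex is $\partial\Delta^{n-2}$ and does match your link computation. A directed path on $n-2$ edges would instead give a full simplex on $n-2$ vertices, which does not match.
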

\begin{proof}
Let $\sigma \in \DT(D)$. Assume $\sigma \notin \Dlf(D)$. Then $D[\sigma]$ contains a vertex whose out-degree is at least $2$. However, in a directed cycle every vertex has in-degree and out-degree equal to $1$, so such a vertex cannot occur. 
Hence $\DT(D) \subseteq \Dlf(D)$. 
As the reverse inclusion holds due to \Cref{eq:inclusion}, the two complexes are equal.

For any $\sigma \subset E(D)$ with $\sigma \neq E(D)$, the induced subgraph $D[\sigma]$ is a directed path with $|\sigma|$ edges. Thus $\sigma \in \Dlf(D)$. On the other hand, if $\sigma = E(D)$, then $D[\sigma] = D$, which is a directed cycle, and so $E(D) \notin \Dlf(D)$. It follows that the complex is the boundary of an $(n-1)$-simplex and therefore, by \cite[Proposition 2.2]{ProBill1980}, it is vertex decomposable.
\end{proof}

\subsection{Independence Complex}
Our main strategy is to represent directed linear forest complexes and directed tree complexes as independence complexes of appropriately constructed graphs. We therefore briefly review the required notions.

Let $G=(V(G),E(G))$ be a simple graph. A subset $S \subseteq V(G)$ is called \emph{independent} if no two vertices of $S$ are adjacent in $G$. The \emph{independence complex} of $G$, denoted $\Ind(G)$, is the simplicial complex whose vertex set is $V(G)$ and whose faces are exactly the independent sets of $G$. Independence complexes have been studied previously in \cite{EHRENBORG06}, \cite{ATCF05}, \cite{VANTUYL08}, \cite{Woodroofe09}.

For $n \ge 3$, the \emph{cycle} of length $n$, written $C_n$, is the graph with vertex set $V(C_n) = [n]$ and edge set
\[
E(C_n) = \bigl\{\{i,i+1\} : 1 \le i \le n-1\bigr\} \cup \bigl\{\{n,1\}\bigr\}.
\]
An \emph{induced cycle} in a graph $G$ is an induced subgraph of $G$ that is isomorphic to $C_n$ for some $n \ge 3$. A graph is called \emph{chordal} if it has no induced subgraph isomorphic to $C_n$ for any $n \ge 4$.

\medskip
The next theorem is a central tool in our proofs.

\begin{theorem}[Theorem~1, \cite{Woodroofe09}]\label{V.D of Ind}
If $G$ is a graph with no induced cycle of length other than $3$ or $5$, then $\Ind(G)$ is vertex decomposable (and hence shellable and
sequentially Cohen-Macaulay).
\end{theorem}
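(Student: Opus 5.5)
The plan is induction on $|V(G)|$, using the standard translation of link and deletion for independence complexes. For a vertex $w$ of $G$ one has
\[
\lk_{\Ind(G)}(w)=\Ind(G-N[w]), \qquad \del_{\Ind(G)}(w)=\Ind(G-w),
\]
where $N[w]$ is the closed neighbourhood. Both $G-N[w]$ and $G-w$ are induced subgraphs of $G$. The hypothesis ``no induced cycle of length other than $3$ or $5$'' passes to induced subgraphs, so by induction both complexes are vertex decomposable. The base cases are graphs with no edges, where $\Ind(G)$ is a simplex, including the empty graph. So everything reduces to showing that every graph $G$ in the class with at least one edge has a \emph{shedding vertex} in the sense of \Cref{def:VD}.

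Next I would reformulate the shedding condition graph-theoretically. A vertex $w$ is shedding for $\Ind(G)$ if and only if there is no independent set $S\subseteq V(G)\setminus N[w]$ that dominates $N(w)$, i.e.\ every $u\in N(w)$ has a neighbour in $S$. Indeed, a facet of $\del(w)$ fails to be a facet of $\Ind(G)$ exactly when it is a maximal independent set of $G-w$ to which $w$ can be added. Such a set avoids $N(w)$, yet is maximal in $G-w$, so it dominates $N(w)$. An immediate consequence: if $N[v]\subseteq N[w]$ for some adjacent pair $v\neq w$, then $w$ is shedding, since a set $S$ avoiding $N[w]$ cannot contain a neighbour of $v$.

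The main step, and the main obstacle, is to produce a shedding vertex in general. I would argue by contradiction. Pick an edge $vw$ and suppose $w$ is not shedding. Take an independent $S$ disjoint from $N[w]$ that minimally dominates $N(w)$. Then:
\begin{itemize}
\item some $s\in S$ is adjacent to $v$, giving the path $w\,v\,s$;
\item minimality of $S$ gives each element of $S$ a private neighbour in $N(w)$.
\end{itemize}
Following these private neighbours, and combining two elements $s,s'\in S$ with their neighbours $u,u'\in N(w)$, yields closed walks $w\,u\,s\,\cdots\,s'\,u'\,w$. Shortcutting these closed walks to induced cycles, I expect to obtain an induced $C_4$ (when $u=u'$ or $u,u'$ share an $S$-neighbour) or an induced even or long cycle, contradicting the hypothesis.

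The delicate point is that a $5$-cycle $w\,u\,s\,s'\,u'$ would be allowed; but $s,s'\in S$ are non-adjacent, so such a configuration cannot close up that way. That suggests choosing $w$ cleverly, for instance a vertex in an extremal position such as the end of a longest induced path, or a vertex maximising $|N[w]|$ among neighbours of a fixed vertex. With such a choice I would show that every configuration dominating $N(w)$ forces an induced $C_4$, $C_6$, or $C_{\ge 7}$. I expect this case analysis, especially ruling out $5$-cycles sneaking in as the only cycles, to be where the real work lies. If it goes through, the induction closes, and shellability and sequential Cohen--Macaulayness follow from \eqref{eq:1}.
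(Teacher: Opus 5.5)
The paper does not prove this statement; it imports it from Woodroofe, so your proposal has to stand on its own. Your reduction is correct and standard:
\begin{itemize}
\item $\lk_{\Ind(G)}(w)=\Ind(G-N[w])$ and $\del_{\Ind(G)}(w)=\Ind(G-w)$;
\item the hypothesis passes to induced subgraphs;
\item $w$ is shedding iff no independent $S\subseteq V(G)\setminus N[w]$ dominates $N(w)$;
\item $N[v]\subseteq N[w]$ forces $w$ to be shedding.
\end{itemize}
After this reduction, however, the entire content of the theorem is that every graph in the class with an edge has a shedding vertex. That claim is not proved: you only say you ``expect'' a case analysis to produce a forbidden cycle ``if it goes through''. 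Your tools do give the chordal case at once (take $v$ simplicial and any $w\in N(v)$). The real difficulty is graphs like $C_5$, where no pair with $N[v]\subseteq N[w]$ exists and the domination criterion must be used in an essential way.

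The sketched contradiction also cannot work in the form given. Non-shedding vertices are common in this class, so assuming $w$ is non-shedding on an arbitrary edge gives no contradiction. In the path $a\,b\,c\,d\,e\,f\,g$, the vertices $a,c,d,e$ are non-shedding, with witnesses $\{c\}$, $\{a,e\}$, $\{b,f\}$, $\{c,g\}$; so the whole of $N[d]$ contains no shedding vertex. Since this graph is a tree, there is no path from $s$ to $s'$ outside $N[w]$, so your closed walks $w\,u\,s\cdots s'\,u'\,w$ contain no cycle to shortcut.

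Your handling of $5$-cycles misses the dangerous ones, which have the form $w\,u\,s\,x\,u'$ with $x\notin S$, not $w\,u\,s\,s'\,u'$. Take the $5$-cycle $w\,u\,s\,x\,u'$ and attach a pendant vertex $z$ to $u'$. Then $\{s,z\}$ witnesses that $w$ is not shedding, yet the only induced cycle is this allowed $C_5$.

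The same obstruction appears with the extremal choice you suggest. Let $p_0p_1\cdots p_k$ be a longest induced path and suppose $p_1$ is not shedding with witness $S$. Maximality, $C_4$-freeness and $C_{\ge 6}$-freeness force $y\in S\cap N(p_0)$ to be non-adjacent to $p_1,p_2$ and adjacent to $p_3$. So you get exactly the permitted induced $5$-cycle $y\,p_0\,p_1\,p_2\,p_3$, not a contradiction.

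The missing ingredient is a specific choice of vertex together with an argument that rules out or exploits such induced $5$-cycles to produce a shedding vertex. That is the step Woodroofe's proof supplies and your proposal leaves open.
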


\section{The complex of Directed Linear Forests}\label{sec:DLF}

In this section, we study the directed linear forest complex $\Dlf(D)$ of a multidigraph $D$ by viewing it as the independence complex of a suitably constructed simple graph, allowing us to apply structural results on independence complexes.

\begin{definition}\label{def: CGDLFC}
Let $D$ be a multidigraph. We associate to $D$ a simple graph $G^{\mathrm{lf}}_D$, called the \emph{linear-forest conflict graph}, whose vertex set is $E(D)$, that is, $V(\lfcg)=E(D)$.
Two vertices $e,f \in E(D)$ are adjacent in the conflict graph if and only if at least one of the following holds:
\begin{enumerate}
    \item $s(e)=s(f)$,
    \item $t(e)=t(f)$,
    \item $\{s(e),t(e)\}=\{s(f),t(f)\}$.
\end{enumerate}    
\end{definition}

Consequently, adjacency in $\lfcg$ encodes exactly the local constraints that prohibit two edges of $D$ from appearing together in a directed linear forest.

However, these local constraints describe directed linear forests completely in terms of the independence complex of the linear-forest conflict graph only when the multidigraph has no long directed cycles.
Let us first define all the relevant terminologies. 

\begin{figure}[H]
\centering
\begin{subfigure}{0.48\textwidth}
\tikzset{vtx/.style={circle, fill=black, inner sep=1.5pt}}
    \centering
    \begin{tikzpicture}
    \node[vtx] (a) at (0,0) {};
    \node[vtx] (c) at (0,-2) {};
    \node[vtx] (b) at (2,0) {};
    \node[vtx] (d) at (2,-2) {};

        \draw[->] (0.2,0)--(1.8,0) node[midway,above] {$e_1$};
        \draw[->] (-0.1,-0.2) -- (-0.1,-1.8) node[midway,left] {$e_7$};
        \draw[->] (1.9,-0.1)--(0.1,-1.9) node[midway,above] {$e_2$};
        \draw[->] (0,-0.2)--(0,-1.8) node[midway,right] {$e_6$};
        \draw[->] (2,-0.2)--(2,-1.8) node[midway,left] {$e_3$};
        \draw[->] (0.2,-2)--(1.8,-2) node[midway,below] {$e_5$};
        \draw[->] (2.1,-1.8) -- (2.1,-0.2) node[midway,right] {$e_4$};
    \end{tikzpicture}
    \caption{Multidigraph $D$}
\end{subfigure}
\hfill
\begin{subfigure}{0.48\textwidth}
    \centering
    \vspace{0.4cm}
    \begin{tikzpicture}
        \node (a) at (0,0) {$e_1$};
        \node (c) at (-1,-1) {$e_6$};
        \node (b) at (2,0) {$e_4$};
        \node (d) at (1,-1) {$e_7$};
        \node (e) at (0,-2) {$e_2$};
        \node (f) at (2,-2) {$e_3$};
        \node (g) at (4,-2) {$e_5$};

        \draw (a)--(c);
        \draw (a)--(b);
        \draw (c)--(d);
        \draw (a)--(d);
        \draw (e)--(c);
        \draw (e)--(d);
        \draw (e)--(f);
        \draw (f)--(g);
        \draw (b)--(f);
    \end{tikzpicture}
    \caption{The associated linear-forest conflict graph $\lfcg$}
\end{subfigure}
\caption{}
\label{association of simple graph 1}
\end{figure}

\begin{definition}\label{def:acyclic}
A multidigraph is called \emph{acyclic} if it has no directed cycles.
\end{definition}

\begin{definition}\label{def: weakly acyclic}
A multidigraph $D$ is called \emph{$2$-acyclic} if it has no directed cycle of length at least $3$.
\end{definition}


\begin{remark}
A $2$-acyclic multidigraph may contain directed $2$-cycles, whereas an acyclic
multidigraph contains no directed cycles at all, and hence does not permit $2$-cycles.
\end{remark}

\begin{lemma}\label{Ind of Dlf}
Let $D$ be a $2$-acyclic multidigraph. Then $\Dlf(D) = \Ind(\lfcg).$
\end{lemma}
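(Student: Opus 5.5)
We prove the two inclusions $\Dlf(D)\subseteq\Ind(\lfcg)$ and $\Ind(\lfcg)\subseteq\Dlf(D)$ separately, working directly with \Cref{def: DLFC} and \Cref{def: CGDLFC}. Both complexes live on the vertex set $E(D)$, so we compare faces.

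For the first inclusion, let $\sigma\in\Dlf(D)$, so $D[\sigma]$ is a vertex-disjoint union of directed paths. Suppose two distinct edges $e,f\in\sigma$ were adjacent in $\lfcg$, and go through the three conditions. If $s(e)=s(f)$, that vertex has out-degree at least $2$ in $D[\sigma]$, which is impossible in a directed path. If $t(e)=t(f)$, the same argument applies to in-degree. If $\{s(e),t(e)\}=\{s(f),t(f)\}$, then $e,f$ are either parallel, so they share a source, or they form a $2$-cycle. A $2$-cycle contained in a single component of $D[\sigma]$ contradicts that component being a directed path, since a path on two vertices has one edge. Hence $\sigma$ is independent. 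This direction does not use $2$-acyclicity.

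For the reverse inclusion, let $\sigma$ be independent in $\lfcg$. By conditions (1) and (2), every vertex of $D[\sigma]$ has in-degree at most $1$ and out-degree at most $1$. By condition (3), there are no parallel edges and no $2$-cycles in $\sigma$. There are also no loops, since a loop $e$ satisfies $s(e)=t(e)$; one should check that loops are excluded by the paper's conventions or handled as $1$-cycles. A connected digraph in which every vertex has in- and out-degree at most $1$ is either a directed path or a directed cycle. To see this, the underlying graph has maximum degree $2$, so it is a path or a cycle. In the cycle case the degree bounds force a consistent orientation. In the path case the degree bounds force each internal vertex to have one incoming and one outgoing edge, so the path is directed. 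A directed cycle component would have length at least $3$, because lengths $1$ and $2$ were just excluded. Such a cycle would be a directed cycle of length at least $3$ in $D$, contradicting $2$-acyclicity. So every component of $D[\sigma]$ is a directed path, and $\sigma\in\Dlf(D)$.

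The main obstacle is the structural step in the second inclusion: that a connected digraph with all in- and out-degrees at most $1$ is a directed path or a directed cycle. We would prove it by following out-edges from a vertex of in-degree $0$ if one exists, which yields a directed path. If no such vertex exists, every vertex has in-degree exactly $1$. Counting edges then shows every vertex also has out-degree exactly $1$, so the component is a directed cycle.
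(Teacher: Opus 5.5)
Your argument is correct and is the same two-inclusion proof the paper gives: independence in $\lfcg$ becomes in- and out-degree at most $1$ with no parallel edges or $2$-cycles, so every component of $D[\sigma]$ is a directed path or a directed cycle. You are more careful than the written proof, which rules out only $2$-cycles and never explicitly invokes $2$-acyclicity to exclude directed cycles of length at least $3$; your caveat about loops is also warranted, though not for the reason your sentence suggests: a single loop is always independent in $\lfcg$ but is not a directed linear forest, so loops must be excluded by convention, as the paper tacitly does.
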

\begin{proof}
$(\subseteq)$ If $\sigma \in \mathrm{Dlf}(D)$, then $D[\sigma]$ is a vertex-disjoint union of
directed paths. Hence, no vertex of $D[\sigma]$ has in-degree or out-degree exceeding $1$,
and there is at most one directed edge between any unordered pair of vertices. Thus, no two
edges in $\sigma$ are adjacent in $\lfcg$, so $\sigma$ is independent.

$(\supseteq)$ If $\sigma$ is independent in $\lfcg$, then in $D[\sigma]$ no vertex has more than one incoming or outgoing edge, and no two edges share the same unordered pair of endpoints. 
Hence, each connected component of $D[\sigma]$ is either a directed path or a directed cycle. 
Since two oppositely oriented edges share the same unordered pair of endpoints, they are adjacent in $\lfcg$ and hence cannot both lie in an independent set.
Thus $\sigma \in \mathrm{Dlf}(D)$.
\end{proof}

\medskip
As a consequence, combinatorial properties of $\Dlf(D)$ can be studied via $\lfcg$. In particular, vertex decomposability of $\Dlf(D)$ is governed by the presence of
induced cycles in $\lfcg$.

We therefore examine when the conflict graph can be a cycle.
To describe the directed configurations corresponding to such cycles, we introduce the following notion.

\begin{definition}\label{def: ACL}
An \emph{alternating closed trail} in a multidigraph $D$ is a cyclic sequence of distinct edges
\[
e_0,e_1,\ldots,e_{k-1},e_0
\]
such that for every $i$ (indices taken modulo $k$):
\[
s(e_i)=s(e_{i+1}) \quad \text{or} \quad t(e_i)=t(e_{i+1}).
\]
\end{definition}
\begin{remark}\label{OBS 1}
Let $D$ be a multidigraph such that its associated conflict graph is isomorphic to $C_n$. Since the vertices of $\lfcg$ correspond to the edges of $D$, we  label
\[
V(C_n)=\{e_0,e_1,\ldots,e_{n-1}\}, \quad 
E(C_n)=\bigl\{\{e_i,e_{i+1}\} : 0 \le i \le n-2\bigr\} \cup \bigl\{\{e_{n-1},e_0\}\bigr\}.
\]

\begin{enumerate}
    \item  Let $n \ge 4$ and suppose that $D$ has a pair of parallel edges. Without loss of
generality, assume that $e_0$ is parallel to $e_1$, that is,
\[
t(e_0)=t(e_1) \quad \text{and} \quad s(e_0)=s(e_1).
\]
Since $e_2$ is adjacent to $e_1$ but not to $e_0$, we must have either
\[
t(e_1)=t(e_2) \ \text{and} \ t(e_0)\neq t(e_2),
\quad \text{or} \quad
s(e_1)=s(e_2) \ \text{and} \ s(e_0)\neq s(e_2).
\]
Both possibilities lead to contradictions. Hence, if a multidigraph $D$ has parallel edges, then the conflict graph is not isomorphic to $C_n$ with $n \ge 4$.

\item Up to isomorphism, the cycle $C_4$ corresponds to the four digraphs depicted in \Cref{fig:association_C4_onerow}. Among these, \Cref{fig:row1} is a digraph whose underlying graph is a tree, and \Cref{fig:row2} is a simple digraph that forms an alternating closed trail.

\begin{figure}[H]
\centering
\begin{subfigure}{0.24\textwidth}
\centering
\scalebox{0.85}{
\begin{tikzpicture}[>=stealth]
    \node (a) at (0,0) {$1$};
    \node (b) at (2,0) {$2$};
    \node (c) at (4,0) {$3$};

    \draw[->] (a) -- (b) node[midway,below] {$e_0$};
    \draw[->] (1.7,0.1) -- (0.2,0.1) node[midway,above] {$e_1$};
    \draw[->] (b) -- (c) node[midway,below] {$e_2$};
    \draw[->] (3.7,0.1) -- (2.2,0.1) node[midway,above] {$e_3$};
\end{tikzpicture}}
\caption{}\label{fig:row1}
\end{subfigure}
\hfill
\begin{subfigure}{0.24\textwidth}
\centering
\scalebox{0.85}{
\begin{tikzpicture}[>=stealth]
    \node (a) at (0,0) {$1$};
    \node (b) at (2,0) {$2$};
    \node (d) at (0,-2) {$4$};
    \node (c) at (2,-2) {$3$};

    \draw[->] (a) -- (b) node[midway,above] {$e_0$};
    \draw[->] (c) -- (b) node[midway,right] {$e_1$};
    \draw[->] (c) -- (d) node[midway,below] {$e_2$};
    \draw[->] (a) -- (d) node[midway,left] {$e_3$};
\end{tikzpicture}}
\caption{}\label{fig:row2}
\end{subfigure}
\hfill
\begin{subfigure}{0.24\textwidth}
\centering
\scalebox{0.85}{
\begin{tikzpicture}[>=stealth]
    \node (a) at (0,0) {$1$};
    \node (b) at (2,0) {$2$};
    \node (c) at (1,-2) {$3$};

    \draw[->] (a) -- (b) node[midway,below] {$e_0$};
    \draw[->] (1.7,0.1) -- (0.2,0.1) node[midway,above] {$e_1$};
    \draw[->] (c) -- (b) node[midway,right] {$e_3$};
    \draw[->] (c) -- (a) node[midway,left] {$e_2$};
\end{tikzpicture}}
\caption{}\label{fig:row3}
\end{subfigure}
\hfill
\begin{subfigure}{0.24\textwidth}
\centering
\scalebox{0.85}{
\begin{tikzpicture}[>=stealth]
    \node (a) at (0,0) {$1$};
    \node (b) at (2,0) {$2$};
    \node (c) at (1,-2) {$3$};

    \draw[->] (a) -- (b) node[midway,below] {$e_0$};
    \draw[->] (1.7,0.1) -- (0.2,0.1) node[midway,above] {$e_1$};
    \draw[->] (b) -- (c) node[midway,right] {$e_2$};
    \draw[->] (a) -- (c) node[midway,left] {$e_3$};
\end{tikzpicture}}
\caption{}\label{fig:row4}
\end{subfigure}
\caption{}
\label{fig:association_C4_onerow}
\end{figure}
\end{enumerate}
\end{remark}

\medskip
The following lemma describes the structure of digraphs whose linear-forest conflict graph is isomorphic to $C_n$ for $n \ge 5$.

\begin{lemma}\label{cycle_associated_graph}
Let $D$ be a digraph whose linear-forest conflict graph $\lfcg$ is isomorphic to $C_n$ with $n \ge 5$. Then the underlying graph of $D$ contains a cycle of length at least $3$. Moreover, if $D$ is simple, then $D$ must be an alternating closed trail, so such a simple digraph exists only when $n$ is even.
\end{lemma}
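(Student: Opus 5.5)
Label the edges of $D$ as $e_0,\dots,e_{n-1}$ (indices mod $n$) so that $e_i\sim e_{i+1}$ are exactly the adjacencies of $\lfcg\cong C_n$. Since $D$ is a digraph there are no parallel edges. So each adjacency $e_i\sim e_{i+1}$ is of one of three kinds: (a) $s(e_i)=s(e_{i+1})$, (b) $t(e_i)=t(e_{i+1})$, or (c) $e_i,e_{i+1}$ form a $2$-cycle. All other pairs must satisfy none of these.

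\textbf{Step 1 (structure at each vertex).} Fix a vertex $v$ of $D$. The edges with source $v$ are pairwise adjacent in $\lfcg$, so they form a clique in $C_n$. Since $n\ge 5$, $C_n$ has no triangles, so there are at most two of them, and if two they are consecutive. The same holds for the edges with target $v$. Hence $d^+(v),d^-(v)\le 2$ for every $v$.

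\textbf{Step 2 (2-cycles and a cycle in $D^{\mathrm{un}}$).} Suppose $e_i,e_{i+1}$ form a $2$-cycle on $\{u,w\}$. Then $e_{i+2}$ must meet $e_{i+1}$ at a shared source or target, so it is incident to $u$ or $w$. It must not create a conflict with $e_i$. Chasing the adjacencies around the cycle, I expect to show that the walk $e_0,e_1,\dots$ through shared endpoints returns to its start. I will argue by contradiction: assume $D^{\mathrm{un}}$ is a forest. The edge set of $D$ is connected through the relation ``share an endpoint'' (since $\lfcg$ is connected), so $D^{\mathrm{un}}$ is a tree. At most two edges of $D$ lie over each edge of $D^{\mathrm{un}}$, namely a $2$-cycle.

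Now consider a leaf $x$ of $D^{\mathrm{un}}$ and its neighbour $y$. The edges at $x$ must have $\lfcg$-degree exactly $2$. An edge $\overrightarrow{xy}$ conflicts only with edges sharing source $x$, target $y$, or endpoints $\{x,y\}$. Counting these neighbours and propagating along the tree should force $\lfcg$ to close up into a cycle of length $4$. This is the configuration of \Cref{fig:row1}, or a path of $2$-cycles, whose conflict graph is not a cycle of length at least $5$. The idea is that the only tree configurations giving $2$-regular connected conflict graphs are the ones in \Cref{OBS 1}(2). I expect this case analysis to be the main obstacle. My first approach is induction on the number of vertices of the tree, deleting a leaf together with its edges.

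\textbf{Step 3 (simple case).} If $D$ is simple, there are no $2$-cycles, so each adjacency is of type (a) or (b). Then $e_0,\dots,e_{n-1},e_0$ is by definition an alternating closed trail, since the edges are distinct. For parity, I claim consecutive adjacencies alternate between type (a) and type (b). Suppose $e_{i-1}\sim e_i$ and $e_i\sim e_{i+1}$ were both of type (a). Then $e_{i-1},e_i,e_{i+1}$ would share a source, so $e_{i-1}\sim e_{i+1}$, which gives a triangle in $C_n$ and is impossible for $n\ge5$. The same argument rules out two consecutive adjacencies of type (b). Around the cycle the $n$ adjacency types therefore alternate, so $n$ is even.
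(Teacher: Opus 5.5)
Your Steps 1 and 3 are sound. In particular, your observation in Step 3 is what justifies the alternation, which the paper merely asserts: two consecutive adjacencies of the same type would make $e_{i-1}$ and $e_{i+1}$ share a source (or a target), which is a chord. The gap is Step 2. The first assertion of the lemma, that $D^{\mathrm{un}}$ contains a cycle, is never proved; you only describe what you expect the case analysis to give. The induction you propose, deleting a leaf of the tree, also has no usable inductive hypothesis, because $\lfcg$ stops being a cycle once arcs are removed. The paper's own argument at this point is similarly brief: it appeals to the absence of the $C_4$-configurations of Remark~\ref{OBS 1} without saying why a forest must contain one.

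The missing idea is a cut argument, using the fact that every conflict relation requires the two arcs to share an endpoint. Suppose $D^{\mathrm{un}}$ is a forest. Since $\lfcg$ is connected, its non-isolated part is a tree $T$.

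\begin{enumerate}
\item Suppose some tree edge $\{u,v\}$ carries a single arc $\epsilon$. No arc in one component of $T-\{u,v\}$ conflicts with an arc in the other component. But $\lfcg-\epsilon$ is a path, hence connected. So one side, say $v$'s, carries no arcs; that is, $v$ is a leaf and $\epsilon$ is its only arc.
\item The two $\lfcg$-neighbours of $\epsilon$ must then both be further arcs into $u$ (if $\epsilon=\overrightarrow{vu}$) or both out of $u$ (if $\epsilon=\overrightarrow{uv}$). This gives $d^-(u)\ge 3$ or $d^+(u)\ge 3$, contradicting your Step 1.
\item Hence every tree edge carries a $2$-cycle. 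Then $T$ has $n/2\ge 3$ edges, so two of them, $\{u,v\}$ and $\{v,w\}$, share a vertex.
\item Their four arcs form a copy of $L_2$. Conflicts are determined pairwise, so these arcs induce in $\lfcg$ the conflict graph of $L_2$, namely $C_4$. This is impossible in $C_n$ for $n\ge 5$.
\end{enumerate}

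Alternatively, after the third item, $d^{\pm}(v)=\deg_T(v)\le 2$ forces $D\cong L_m$, and no $L_m$ has a conflict graph isomorphic to $C_n$ with $n\ge5$. With this argument in place of Step 2, your proof is complete.
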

\begin{proof}
Suppose $\lfcg \cong C_n$ with $n \ge 5$. Since $e_i$ is adjacent to $e_{i+1}$ in $\lfcg$, each consecutive pair of edges shares either a common source, a common target, or the same unordered pair of endpoints. By \Cref{OBS 1} and the fact that $D$ is a digraph, $D$ has no parallel edges. Furthermore, $D$ cannot contain the subgraph in \Cref{fig:association_C4_onerow}, since otherwise \Cref{OBS 1} would imply that $\lfcg$ contains a $C_4$. Therefore, the underlying graph of $D$ contains a cycle of length at least $3$. 

Now assume $D$ is simple. Then parallel edges and $2$-cycles are forbidden, so adjacency in $\lfcg$ arises only from pairs of edges with a common source or a common target. Thus, along the cycle $e_0,e_1,\ldots,e_{n-1},e_0$, consecutive edges must alternately share a common source and a common target, forming an alternating closed trail. This is possible only when $n$ is even.
\end{proof}

We now combine the preceding remark with \Cref{Ind of Dlf} to obtain a sufficient criterion
for the vertex decomposability of the directed linear forest complex, formulated in terms of
forbidden induced subgraphs. 

\begin{theorem}\label{thm: VDDLFC}
Let $D$ be an acyclic multidigraph that contains no alternating closed trail. Then the directed linear forest complex $\Dlf(D)$ is vertex decomposable.
\end{theorem}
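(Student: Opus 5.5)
Since an acyclic multidigraph has no directed cycles at all, it is in particular $2$-acyclic. So \Cref{Ind of Dlf} gives $\Dlf(D)=\Ind(\lfcg)$. By \Cref{V.D of Ind} it then suffices to prove that $\lfcg$ is chordal, i.e.\ has no induced cycle $C_n$ with $n\ge 4$; induced triangles are allowed by Woodroofe's theorem. The plan is to show that any induced cycle of length at least $4$ in $\lfcg$ produces an alternating closed trail in $D$, contradicting the hypothesis.

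First I simplify the adjacency relation under acyclicity. Since $D$ has no $2$-cycles, condition (3) of \Cref{def: CGDLFC} can only hold for parallel edges. Parallel edges already satisfy (1) and (2). Hence two edges are adjacent in $\lfcg$ iff $s(e)=s(f)$ or $t(e)=t(f)$.

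Next, suppose $e_0,e_1,\ldots,e_{n-1}$ induce a cycle $C_n$ in $\lfcg$ with $n\ge4$. Let $D'=D[\{e_0,\dots,e_{n-1}\}]$. Adjacency in the conflict graph depends only on the endpoints of the two edges involved, so $G^{\mathrm{lf}}_{D'}$ is exactly the induced subgraph of $\lfcg$ on these edges. Thus $G^{\mathrm{lf}}_{D'}\cong C_n$. By \Cref{OBS 1}(1), $D'$ then contains no pair of parallel edges. The point to check there is that a parallel pair $e_i,e_{i+1}$ forces $e_{i+2}$ to be adjacent to $e_i$ as well, which is a chord when $n\ge4$.

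Consequently, for every $i$ (indices mod $n$), the consecutive edges $e_i,e_{i+1}$ are distinct and satisfy $s(e_i)=s(e_{i+1})$ or $t(e_i)=t(e_{i+1})$. This is precisely the condition in \Cref{def: ACL}, so $e_0,e_1,\ldots,e_{n-1},e_0$ is an alternating closed trail in $D$, a contradiction. Hence $\lfcg$ is chordal, and by \Cref{V.D of Ind}, $\Ind(\lfcg)=\Dlf(D)$ is vertex decomposable.

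The main obstacle is the handling of multi-edges. One must check that parallel edges cannot occur inside an induced long cycle, so that each conflict along the cycle genuinely comes from a shared source or a shared target. One must also confirm that the definition of alternating closed trail, which only asks for a shared source or target between consecutive edges, is met without further alternation requirements.
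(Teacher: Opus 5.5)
Your proof is correct and takes the same route as the paper. You use $\Dlf(D)=\Ind(\lfcg)$ from \Cref{Ind of Dlf}, show $\lfcg$ is chordal because an induced $C_n$ with $n\ge 4$ would give an alternating closed trail in $D$, and finish with \Cref{V.D of Ind}; where the paper cites \Cref{OBS 1} and \Cref{cycle_associated_graph} for the chordality step, you read it off directly from the fact that for acyclic $D$ adjacency in $\lfcg$ means sharing a source or a target (your appeal to \Cref{OBS 1}(1) is harmless but unnecessary, since parallel edges share a source anyway).
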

\begin{proof}
By \Cref{Ind of Dlf}, we have $\Dlf(D) = \Ind(\lfcg)$. By \Cref{OBS 1} and \Cref{cycle_associated_graph}, the graph $\lfcg$ has no induced subgraph isomorphic to $C_n$ for $n \ge 4$. Hence, $\lfcg$ is chordal. The result now follows from \Cref{V.D of Ind}.
\end{proof}

The above theorem shows that the absence of certain cyclic configurations guarantees strong topological properties. However, such behavior does not hold in general, as shown by the following example.

\begin{figure}[H]
\centering
\begin{tikzpicture}[>=stealth]

\node (1) at (0,0) {$1$};
\node (2) at (2,0) {$2$};
\node (3) at (4,0) {$3$};
\node at (5,0) {$\cdots$};
\node (4) at (6,0) {$n$};
\node (5) at (8,0) {$n+1$};

\draw[->] (1) -- (2) node[midway,below] {$e_1$};
\draw[->] (1.7,0.1) -- (0.2,0.1) node[midway,above] {$e_1'$};

\draw[->] (2) -- (3) node[midway,below] {$e_2$};
\draw[->] (3.7,0.1) -- (2.2,0.1) node[midway,above] {$e_2'$};

\draw[->] (4) -- (5) node[midway,below] {$e_n$};
\draw[->] (7.3,0.1) -- (6.2,0.1) node[midway,above] {$e_n'$};

\end{tikzpicture}
\caption{$L_n$}\label{fig: double directed string}
\end{figure}

\begin{example}\label{Ex 1}
Let $n \ge 2$. Consider the double directed string $L_n$ with $n+1$ vertices (see
\Cref{fig: double directed string}), given by
\[
V(L_n) = [n+1], \qquad
E(L_n) = \{ e_i = \overrightarrow{(i)(i+1)},\;
e_i' = \overrightarrow{(i+1)(i)} : i \in [n] \}.
\]
Observe that
\[
\{e_1, e_2, \ldots, e_{i-1}, e_i, e_{i+1}, \ldots, e_n\}
\quad \text{and} \quad
\{e_1', e_2', \ldots, e_{i-1}', e_i', e_{i+1}', \ldots, e_n'\}
\]
are precisely the two facets of dimension $(n-1)$ of $\Dlf(L_n)$. It follows that the pure
$(n-1)$-skeleton $(\Dlf(L_n))^{[n-1]}$ is disconnected. In particular,
\[
\widetilde{H}_0\!\left((\Dlf(L_n))^{[n-1]},\mathbb{Z}\right) \neq 0,
\]
so $\Dlf(L_n)$ fails to be sequentially Cohen–Macaulay. Therefore, by \Cref{eq:1}, $\Dlf(L_n)$ is not shellable, as previously established in \cite[Proposition 3.2]{CaputiCollariSmith2026}.
\end{example}

\begin{figure}[h]
\centering
\newdimen\R
\R=2.00cm

\newdimen\Rb
\Rb=1.75cm
\begin{tikzpicture}
\draw[xshift=5.0\R] (270:\R) node {$1$};
\draw[xshift=5.0\R] (225:\R) node {$2$};
\draw[xshift=5.0\R,fill] (180:\R) node {$3$};
\draw[xshift=5.0\R,fill] (135:\R) node {$4$};
\draw[xshift=5.0\R, fill] (90:\R) node {$5$};
\draw[xshift=5.0\R,fill] (45:\R) node {$6$};
\draw[xshift=5.0\R,fill] (0:\R) node {$7$};
\draw[xshift=5.0\R,fill] (315:\R) node {$n$};

\node[xshift=5.0\R] (v0) at (270:\R) { };
\node[xshift=5.0\R] (v1) at (225:\R) { };
\node[xshift=5.0\R] (v2) at (180:\R) { };
\node[xshift=5.0\R] (v3) at (135:\R) { };
\node[xshift=5.0\R] (v4) at (90:\R) { };
\node[xshift=5.0\R] (v5) at (45:\R) { };
\node[xshift=5.0\R] (v6) at (0:\R) { };
\node[xshift=5.0\R] (vn) at (315:\R) { };

\draw[-latex] (v0) to[bend left] (v1);
\draw[-latex] (v1) to[bend left] (v2);
\draw[-latex] (v2) to[bend left] (v3);
\draw[-latex] (v3) to[bend left] (v4);
\draw[-latex] (v4) to[bend left] (v5);
\draw[-latex] (v5) to[bend left] (v6);
\draw[-latex] (vn) to[bend left] (v0);

\draw[latex-] (v0) to[bend right] (v1);
\draw[latex-] (v1) to[bend right] (v2);
\draw[latex-] (v2) to[bend right] (v3);
\draw[latex-] (v3) to[bend right] (v4);
\draw[latex-] (v4) to[bend right] (v5);
\draw[latex-] (v5) to[bend right] (v6);
\draw[latex-] (vn) to[bend right] (v0);

\draw[xshift=5.0\R, fill] (292.5:\Rb) node[above left] {$e'_{n}$};
\draw[xshift=5.0\R,fill] (247.5:\Rb) node[above right] {$e'_1$};
\draw[xshift=5.0\R,fill] (210.5:\Rb)   node[above right] {$e'_2$};
\draw[xshift=5.0\R,fill] (150.5:\Rb)  node[below right] {$e'_3$};
\draw[xshift=5.0\R, fill] (112.5:\Rb)   node[below right] {$e'_4$};
\draw[xshift=5.0\R,fill] (67.5:\Rb) node[below left] {$e'_5$};
\draw[xshift=5.0\R,fill] (22.5:\Rb) node[below left] {$e'_6$};

\draw[xshift=5.0\R, fill] (292.5:\R) node[below right] {$e_{n}$};
\draw[xshift=5.0\R,fill] (247.5:\R) node[below left] {$e_1$};
\draw[xshift=5.0\R,fill] (202.5:\R)   node[below left] {$e_2$};
\draw[xshift=5.0\R,fill] (157.5:\R)  node[above left] {$e_3$};
\draw[xshift=5.0\R, fill] (112.5:\R)   node[above left] {$e_4$};
\draw[xshift=5.0\R,fill] (67.5:\R) node[above right] {$e_5$};
\draw[xshift=5.0\R,fill] (22.5:\R) node[above right] {$e_6$};
\draw[xshift=4.95\R,fill] (337.5:\R)  node {$\cdot$} ;
\draw[xshift=4.95\R,fill] (333:\R)  node {$\cdot$} ;
\draw[xshift=4.95\R,fill] (342:\R)  node {$\cdot$} ;
\end{tikzpicture}
\caption{$P_n$} 
\label{fig: double directed cycle}
\end{figure}

\begin{example}\label{Ex 2}
Let $n \ge 3$. Consider the double directed cycle $P_n$ on $n$ vertices (see \Cref{fig: double directed cycle}), defined by
\[
V(P_n) = \mathbb{Z}_{n}, \qquad
E(L_n) = \{ e_i = \overrightarrow{(i)(i+1)},\;
e_i' = \overrightarrow{(i+1)(i)} : i \in \mathbb{Z}_{n} \}.
\]
Observe that the facets of the pure $(n-2)$-skeleton $(\Dlf(P_n))^{[n-2]}$ are precisely the sets of the following types:
\[
\{e_i, e_{i+1}, \ldots, e_{i-2}\}
\quad \text{and} \quad
\{e_i', e_{i+1}', \ldots, e_{i-2}'\},
\]
for each $i \in \mathbb{Z}_{n}$, where all indices are taken modulo $n$.

Therefore, the pure $(n-2)$-skeleton $(\Dlf(P_n))^{[n-2]}$ is disconnected. In particular,
\[
\widetilde{H}_0\!\left((\Dlf(P_n))^{[n-2]},\mathbb{Z}\right) \neq 0,
\]
which implies that $\Dlf(P_n)$ is not sequentially Cohen-Macaulay. Consequently, by \Cref{eq:1}, $\Dlf(P_n)$ is not shellable, as established in \cite[Proposition 3.1]{CaputiCollariSmith2026}.
\end{example}

Finally, we examine the particular case where the underlying graph is a forest. In this context, the lack of cycles guarantees vertex decomposability, provided a mild condition on the induced subgraph is satisfied.

\begin{definition}\label{def:Ln free}
A multidigraph $D$ is called \emph{$L_{n}$-free} if it does not contain an induced subgraph isomorphic to $L_n$.
\end{definition}

\begin{theorem}\label{thm: VDL2F}
Let $D$ be an $L_2$-free multidigraph whose underlying graph is a forest. Then $\Dlf(D)$ is vertex decomposable.
\end{theorem}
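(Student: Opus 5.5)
The plan is to reduce to \Cref{V.D of Ind} via \Cref{Ind of Dlf}, by showing that the linear-forest conflict graph $\lfcg$ is chordal.

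\emph{Step 1.} Since the underlying graph $D^{\mathrm{un}}$ is a forest, $D$ has no directed cycle of length at least $3$, because such a cycle would give a cycle of length at least $3$ in $D^{\mathrm{un}}$. Hence $D$ is $2$-acyclic, and \Cref{Ind of Dlf} gives $\Dlf(D)=\Ind(\lfcg)$. It therefore suffices to show that $\lfcg$ has no induced cycle of length $n\ge 4$.

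\emph{Step 2 (localisation).} Suppose $A\subseteq E(D)$ induces a copy of $C_n$ in $\lfcg$ with $n\ge 4$. Adjacency in $\lfcg$ depends only on the two edges involved. Hence the induced subgraph $\lfcg[A]$ is exactly the conflict graph of the multidigraph $D[A]$, so $D[A]$ has conflict graph $C_n$. By \Cref{OBS 1}(1), $D[A]$ has no parallel edges, so it is a digraph. Its underlying graph is a subgraph of the forest $D^{\mathrm{un}}$, so it is again a forest.

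\emph{Step 3 (case analysis).}
\begin{itemize}
\item If $n\ge 5$, then \Cref{cycle_associated_graph} applied to $D[A]$ gives a cycle of length at least $3$ in its underlying graph. This is a contradiction.
\item If $n=4$, then by \Cref{OBS 1}(2) the digraph $D[A]$ is one of the four configurations in \Cref{fig:association_C4_onerow}. Configurations (B), (C), (D) have a cycle in the underlying graph, so they are excluded.
\item This leaves configuration (A): three vertices $1,2,3$ with both orientations of $\{1,2\}$ and of $\{2,3\}$, which is a copy of $L_2$. I then pass to the induced subgraph $D[\{1,2,3\}]$. There is no edge between $1$ and $3$, since it would close a triangle in the forest $D^{\mathrm{un}}$. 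So if $D$ has no further parallel edges between $1,2$ or $2,3$, this induced subgraph is isomorphic to $L_2$, contradicting $L_2$-freeness.
\end{itemize}
With all cases excluded, $\lfcg$ is chordal, and \Cref{V.D of Ind} yields vertex decomposability of $\Dlf(D)$.

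\emph{Expected obstacle.} The delicate point is the last case of Step 3 when $D$ carries extra parallel edges on $\{1,2\}$ or $\{2,3\}$. Then $D[\{1,2,3\}]$ is not literally isomorphic to $L_2$, yet the four edges $e_0,e_1,e_2,e_3$ of configuration (A) still form an induced $C_4$ in $\lfcg$. For instance, with two parallel $\overrightarrow{12}$ edges, the extra parallel edge conflicts with all four but is not among them. So I expect to need one of two fixes. The first is to read $L_2$-freeness as ``no subdigraph isomorphic to $L_2$ on three vertices,'' which is how I would phrase the hypothesis in the proof. The second is to verify directly, e.g.\ via a shedding vertex, that such parallel copies do not destroy vertex decomposability. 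I would first try to settle which reading is intended, and otherwise fall back on the subdigraph interpretation.
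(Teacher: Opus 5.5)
Your route is the paper's: $\Dlf(D)=\Ind(\lfcg)$ by \Cref{Ind of Dlf}, then chordality of $\lfcg$ via \Cref{cycle_associated_graph} and \Cref{OBS 1}, then \Cref{V.D of Ind}. Your Step~2 is exactly what the paper's one-line appeal to \Cref{OBS 1} leaves implicit: you restrict to the edge set $A$ of the induced cycle, note that $\lfcg[A]$ is the conflict graph of $D[A]$, and use \Cref{OBS 1}(1) to make $D[A]$ a digraph.

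The obstacle you flag, however, is not a technicality, and your second fix cannot succeed. If $L_2$-free is read in the usual vertex-induced sense ($D[U]\not\cong L_2$ for all $U\subseteq V(D)$), the statement is false. Let $V(D)=\{1,2,3\}$ with edges $a,a'$ both equal to $\overrightarrow{12}$, $b=\overrightarrow{21}$, $c=\overrightarrow{23}$, $d=\overrightarrow{32}$. The underlying graph is a path. The only induced subgraph on three vertices is $D$ itself, which has five edges, so $D$ has no induced $L_2$. In $\lfcg$ the set $\{a,b,c,d\}$ spans an induced $C_4$. The extra copy $a'$ is adjacent to $a,b,d$ but not to $c$, so not to all four as you wrote. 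Hence $\Dlf(D)$ has exactly the facets $\{a,c\}$, $\{a',c\}$, $\{b,d\}$. This is a pure $1$-dimensional complex with two connected components, so it is not Cohen--Macaulay, hence not shellable and not vertex decomposable.

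So you must commit to your first option: read the hypothesis as saying that no $A\subseteq E(D)$ has $D[A]\cong L_2$. The paper's edge-induced notation $D[A]$ makes this reading available, and the paper's own proof tacitly relies on it, since \Cref{OBS 1} only identifies the edge set of the $C_4$. Alternatively, keep the vertex-induced reading but assume $D$ has no parallel edges. Then four edges forming $L_2$ on $\{1,2,3\}$ already exhaust $D[\{1,2,3\}]$: the forest condition excludes $1$--$3$ edges, and both orientations of $\{1,2\}$ and $\{2,3\}$ are used. Under either hypothesis your last case contradicts the assumption directly, and your proof is complete. The shedding-vertex fallback should be dropped. (Your appeal to \Cref{cycle_associated_graph} is sound for what you need. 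If you prefer to avoid it, a leaf argument on the subtree of $D^{\mathrm{un}}$ spanned by $A$ shows directly that every induced cycle of length at least $4$ in $\lfcg$ is a $C_4$ whose four vertices, viewed as edges of $D$, form a copy of $L_2$.)
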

\begin{proof}
By \Cref{Ind of Dlf}, we know that $\Dlf(D)=\Ind(\lfcg)$. Since $D$ is $L_2$-free by \Cref{OBS 1}, the graph $\lfcg$ does not contain an induced subgraph isomorphic to $C_4$. Moreover, because the underlying graph of $D$ is a forest by \Cref{cycle_associated_graph}, the graph $\lfcg$ has no induced subgraph isomorphic to $C_n$ for any $n \ge 5$. Therefore, $\lfcg$ is chordal, and the result follows from \Cref{V.D of Ind}. \end{proof}

\section{The Complex of Directed Trees}\label{sec:DT}

In this section, we relate the directed tree complex $\DT(D)$ of a multidigraph $D$ to the independence complex of an associated conflict graph.

The combinatorial properties of the directed tree complexes have already been studied.
For an acyclic simple digraph, Engstr\"om \cite[Corollary 2.10]{Engstrom05} showed that the
directed tree complex is shellable.
Later, Singh \cite[Theorem 5.1]{anurag21} proved that the directed tree complex is vertex decomposable for multidigraphs whose underlying graph is a forest.
In a related direction, Du\v{s}ko \cite{Dusko13} associated a simple graph to a digraph whose underlying graph is a tree and showed that the corresponding directed tree complex is vertex decomposable \cite[Theorem 15]{Dusko13}.
Motivated by these works, we introduce a unified construction that, among other things, generalizes the above results.

\begin{definition}\label{def: TCG}
Let $D$ be a multidigraph. We define the associated \emph{tree conflict graph} $\tcg$ whose vertex set is $E(D)$.
Two vertices $e,f \in E(D)$ are adjacent in $\tcg$ if and only if at least one of the following holds:
\begin{enumerate}
    \item $t(e)=t(f)$,
    \item $\{s(e),t(e)\}=\{s(f),t(f)\}$.
\end{enumerate}    
\end{definition}

Thus, adjacency in $\tcg$ records precisely the local obstructions preventing two edges of $D$ from simultaneously belonging to a directed forest. The construction is illustrated in \Cref{association of simple graph 2}.

\begin{figure}[H]
\centering
\tikzset{vtx/.style={circle, fill=black, inner sep=1.5pt}}
\begin{subfigure}{0.48\textwidth}
    \centering
    \begin{tikzpicture}
    \node[vtx] (a) at (0,0) {};
    \node[vtx] (c) at (0,-2) {};
    \node[vtx] (b) at (2,0) {};
    \node[vtx] (d) at (2,-2) {};

        \draw[->] (0.2,0)--(1.8,0) node[midway,above] {$e_1$};
        \draw[->] (-0.1,-0.2) -- (-0.1,-1.8) node[midway,left] {$e_7$};
        \draw[->] (1.9,-0.1)--(0.1,-1.9) node[midway,above] {$e_2$};
        \draw[->] (0,-0.2)--(0,-1.8) node[midway,right] {$e_6$};
        \draw[->] (2,-0.2)--(2,-1.8) node[midway,left] {$e_3$};
        \draw[->] (0.2,-2)--(1.8,-2) node[midway,below] {$e_5$};
        \draw[->] (2.1,-1.8) -- (2.1,-0.2) node[midway,right] {$e_4$};
    \end{tikzpicture}
    \caption{Multidigraph $D$}
\end{subfigure}
\hfill
\begin{subfigure}{0.48\textwidth}
    \centering
    \vspace{0.4cm}
    \begin{tikzpicture}
        \node (a) at (0,0) {$e_1$};
        \node (c) at (-1,-1) {$e_6$};
        \node (b) at (2,0) {$e_4$};
        \node (d) at (1,-1) {$e_7$};
        \node (e) at (0,-2) {$e_2$};
        \node (f) at (2,-2) {$e_3$};
        \node (g) at (4,-2) {$e_5$};

        \draw[-] (a)--(b);
        \draw[-] (c)--(d);
        \draw[-] (e)--(c);
        \draw[-] (e)--(d);
        \draw[-] (f)--(g);
        \draw[-] (b)--(f);
    \end{tikzpicture}
    \caption{The associated tree conflict graph $\tcg$}
\end{subfigure}
\caption{}
\label{association of simple graph 2}
\end{figure}

The following lemma shows that, under mild assumptions, the directed tree complex of a multidigraph can be interpreted exactly as the independence complex of its tree conflict graph.

\begin{lemma}\label{Ind of DT}
Let $D$ be a $2$-acyclic multidigraph. Then $\DT(D) = \Ind(\tcg).$
\end{lemma}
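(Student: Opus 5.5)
The plan is to mirror the proof of \Cref{Ind of Dlf} and prove the two inclusions separately, using only the definition of a directed forest (no directed cycles, no two edges with a common target) together with the $2$-acyclicity of $D$.

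For $(\subseteq)$, take $\sigma \in \DT(D)$, so $D[\sigma]$ is a directed forest. Let $e \neq f$ be edges in $\sigma$. If $t(e)=t(f)$, then two distinct edges of a directed forest share a target, which is impossible. If $\{s(e),t(e)\}=\{s(f),t(f)\}$, there are two cases.
\begin{itemize}
\item If $e,f$ are parallel, they again share a target.
\item If $e,f$ form a $2$-cycle, then $D[\sigma]$ contains a directed cycle of length $2$, which is impossible.
\end{itemize}
Hence no two edges of $\sigma$ are adjacent in $\tcg$, and $\sigma$ is independent.

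For $(\supseteq)$, take $\sigma$ independent in $\tcg$. Condition (1) of \Cref{def: TCG} fails for every pair in $\sigma$, so every vertex of $D[\sigma]$ has in-degree at most $1$. Condition (2) also fails, so $\sigma$ contains no parallel edges and no $2$-cycles. It remains to exclude directed cycles in $D[\sigma]$.
\begin{itemize}
\item A directed cycle of length $1$ would be a loop. Loops are implicitly excluded, since a directed cycle is a digraph, or we can handle them by the convention that a loop is itself a directed cycle of length $1$ and so is excluded by the acyclicity hypothesis. I would add a remark on this.
\item A directed cycle of length $2$ is a $2$-cycle, which was just excluded.
\item A directed cycle of length at least $3$ in $D[\sigma]$ would be a directed cycle of length at least $3$ in $D$, contradicting that $D$ is $2$-acyclic.
\end{itemize}
Therefore $D[\sigma]$ has no directed cycles and no two edges with a common target. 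By definition it is a directed forest, since each connected component is a directed tree, so $\sigma \in \DT(D)$.

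The main subtlety is the $(\supseteq)$ direction. Independence in $\tcg$ only encodes pairwise (local) constraints, while acyclicity is a global condition. The $2$-acyclic hypothesis is exactly what bridges this gap: cycles of length $2$ are caught by condition (2), and longer cycles cannot occur in $D$ at all. I would also check briefly that the definition of directed forest as a "digraph" is consistent with $D[\sigma]$. This holds because independence rules out parallel edges, so $D[\sigma]$ is genuinely a digraph.
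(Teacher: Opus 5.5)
Your proof is correct and takes the same route as the paper: both inclusions are checked through the pairwise adjacency conditions of $\tcg$, with $2$-cycles ruled out by condition (2) and longer directed cycles by $2$-acyclicity. Your split of the unordered-pair case into parallel edges versus a $2$-cycle is slightly more careful than the paper's wording. One correction to your side remark on loops: neither ``a directed cycle is a digraph'' nor $2$-acyclicity excludes a loop, since $2$-acyclicity only forbids directed cycles of length at least $3$, and a loop $e$ would give $\{e\}\in\Ind(\tcg)\setminus\DT(D)$. So the lemma needs $D$ to be loopless as a standing convention, which the paper tacitly assumes; it does not follow from the hypothesis.
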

\begin{proof}
\noindent\emph{($\subseteq$)}
Let $\sigma \in \DT(D)$, then $D[\sigma]$ is a directed forest. Hence no vertex of $D[\sigma]$ has in-degree exceeding $1$, and there is at most one directed edge between any unordered pair of vertices. Thus no two edges in $\sigma$ are adjacent in $\tcg$, so $\sigma$ is independent. 

\noindent\emph{($\supseteq$)}
If $\sigma$ is independent in $\tcg$, then in $D[\sigma]$ no vertex has more than one incoming edge, and no two edges share the same unordered pair of endpoints. Hence, each connected component of $D[\sigma]$ is either a directed tree or contains a directed cycle as an induced subgraph. Since $D$ contains no directed cycle of length at least $3$ and $2-$ cycles are excluded by the construction of $\tcg$, $D[\sigma]$ contains no directed cycle. Thus, $\sigma \in \DT(D)$.
\end{proof}

We now investigate the structural property of the tree conflict graph $\tcg$. In particular, we show that $\tcg$ is chordal under mild assumptions on the multidigraph $D$.

\begin{theorem}\label{chordal graph correspondence}
If $D$ is a $2$-acyclic multidigraph, then $\tcg$ is a chordal graph.
\end{theorem}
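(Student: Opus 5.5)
The plan is to argue directly by contradiction: suppose $\tcg$ contains an induced cycle $C$ with edges $e_0,e_1,\ldots,e_{k-1}$ (indices mod $k$) and $k\ge 4$, and extract from it a directed cycle of length at least $3$ in $D$. The key structural observation is that each of the two adjacency conditions in \Cref{def: TCG} is an equivalence relation on $E(D)$. "Same target" is one, and "same unordered pair of endpoints" is the other. So $\tcg$ is the union of two families of cliques: the target classes $T_v=\{e: t(e)=v\}$ and the pair classes $P_{\{u,w\}}$. I will call an edge $e_ie_{i+1}$ of $C$ of \emph{type $T$} if $t(e_i)=t(e_{i+1})$, and of \emph{type $P$} otherwise; in the latter case $e_i,e_{i+1}$ have the same endpoint pair and distinct targets.

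\emph{Step 1: the types alternate.} If two consecutive edges $e_{i-1}e_i$ and $e_ie_{i+1}$ of $C$ were both of type $T$, then by transitivity $t(e_{i-1})=t(e_{i+1})$. This gives a chord, since $k\ge4$ means $e_{i-1},e_{i+1}$ are non-adjacent in $C$. Two consecutive type-$P$ edges likewise yield a chord. Hence the types alternate around $C$, so $k=2m$ is even, and after reindexing $e_{2j}e_{2j+1}$ is of type $P$ and $e_{2j+1}e_{2j+2}$ is of type $T$.

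\emph{Step 2: read off a directed closed walk.} A type-$P$ pair $e_{2j},e_{2j+1}$ shares the endpoint pair but has distinct targets, so $e_{2j+1}$ is the reverse of $e_{2j}$. Set $u_j=s(e_{2j})$, so that $e_{2j}=\overrightarrow{u_j\,t(e_{2j})}$. Then $t(e_{2j+1})=u_j$, and the type-$T$ condition gives $t(e_{2j+2})=u_j$. Since also $s(e_{2j+2})=u_{j+1}$, we get $e_{2j+2}=\overrightarrow{u_{j+1}u_j}$. Therefore the edges $e_{2m-2},\ldots,e_2,e_0$ traced backwards form a closed directed walk $u_{m-1}\to\cdots\to u_1\to u_0\to u_{m-1}$ of length $m$.

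\emph{Step 3: the walk is a genuine directed cycle.} This is the main obstacle: I must show the $u_j$ are pairwise distinct. Each $u_j$ is the common target of the $T$-pair $\{e_{2j+1},e_{2j+2}\}$. If $u_j=u_l$ for some $j\ne l$, then $e_{2j+1}$ and $e_{2l+1}$ would be adjacent in $\tcg$. Since these are distinct, non-consecutive vertices of $C$, that is a chord, which is impossible. Hence the walk is a directed cycle of length $m=k/2$.

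If $m\ge3$, this contradicts $2$-acyclicity. If $m=2$, i.e.\ $k=4$, then all four edges $e_0,\ldots,e_3$ have endpoint set $\{u_0,u_1\}$. Thus $e_0$ and $e_2$ lie in the same pair class, giving a chord. In all cases no induced $C_k$ with $k\ge4$ exists, so $\tcg$ is chordal.
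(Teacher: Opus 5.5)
Your proof is correct and follows the same route as the paper: the two adjacency types (common target versus common endpoint pair) must alternate around an induced cycle, each endpoint-pair step is a $2$-cycle, and these chain head-to-tail into a directed cycle in $D$. Your version is more careful than the paper's, which asserts a directed cycle of length $n \ge 3$ without checking that its vertices are distinct and overlooks that for $n=4$ the walk has length $n/2=2$ (a $2$-cycle, which $2$-acyclicity permits); your Step~3 chord argument for distinctness and your separate chord $e_0e_2$ for $k=4$ close exactly these gaps.
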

\begin{proof}
Assume, aiming for a contradiction, that $G^{\mathrm{t}}_D$ contains an induced cycle $C_n$ with $n \ge 4$,
\[
V(C_n)=\{e_1,\ldots,e_{n}\}, \quad 
E(C_n)=\bigl\{\{e_i,e_{i+1}\} : 1 \le i \le n-1\bigr\} \cup \bigl\{\{e_{n},e_1\}\bigr\}.
\]
Vertices of $G^{\mathrm{t}}_D$ represent edges of $D$. Since $C_n$ is an induced cycle, no nonconsecutive vertices on the cycle are adjacent in $G^{\mathrm{t}}_D$.
Recall that two edges of $D$ are adjacent in $G^{\mathrm{t}}_D$ precisely when either they share the same target (we will refer to such adjacencies as type $T$) or they have the same unordered pair of endpoints (these will be called type $E$).

\noindent
\textbf{Claim: The edges of $C_n$ must alternate in type.}

Suppose that both $\{e_i, e_{i+1}\}$ and $\{e_{i+1}, e_{i+2}\}$ are of type $T$.
Then, by transitivity, $t(e_i) = t(e_{i+1}) = t(e_{i+2})$, which implies that there is an edge $\{e_i, e_{i+2}\}$ in $G^{\mathrm{t}}_D$, contradicting the fact that $C_n$ is an induced cycle of length at least $4$.

An analogous argument shows that we cannot have two consecutive edges of type $E$ in $C_n$.
Therefore, the edge-types on $C_n$ must strictly alternate between $T$ and $E$.
In particular, the length of the cycle must be even, so write $n = 2k$.

\medskip
We now interpret the edges of $C_n$ back in the multidigraph $D$.
Assume that $\{e_1, e_2\}$ is of type $T$, so $e_1$ and $e_2$ have a common target $v_1$ in $D$.
Then $\{e_2, e_3\}$ must be of type $E$, meaning they share the same pair of endpoints.
Because $e_3$ and $e_1$ cannot also share the same target (otherwise $\{e_1,e_3\}$ would be an edge in $C_n$), it follows that $e_2 = \overrightarrow{v_2v_1}$ and $e_3 = \overrightarrow{v_1v_2}$ form a $2$-cycle in $D$.
By repeating this reasoning, $\{e_3, e_4\}$ is of type $T$ and $\{e_4, e_5\}$ is of type $E$, and so on around the cycle.
Thus, the induced cycle $C_n$ in $G^{\mathrm{t}}_D$ encodes a sequence of $2$-cycles in $D$ linked head-to-tail: $e_2, e_3$ between $v_1, v_2$; $e_4, e_5$ between $v_2, v_3$; etc.
These fit together into a directed walk given by the edges $e_1, e_3, e_5, e_7, \dots$ in $D$ that eventually closes to form a directed cycle.

\medskip

For each $i \in \{2,4,6,\ldots,n\}$, the pair $\{e_i,e_{i+1}\}$ determines a $2$-cycle (with the convention $e_{n+1}=e_1$), so following these edges yields a directed cycle of length $n \ge 3$ in $D$, contradicting the assumption that $D$ is $2$-acyclic.

\medskip
Hence $G^{\mathrm{t}}_D$ contains no induced cycle of length at least $4$, and therefore it is chordal.
\end{proof}

We now combine the preceding results to derive a key combinatorial consequence for the directed tree complexes.

\begin{corollary}\label{cor: V.D for DT(D)}
If $D$ is a $2$-acyclic multidigraph, then its directed forest complex $\DT(D)$ is vertex decomposable.
\end{corollary}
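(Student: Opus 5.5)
The plan is to combine the three ingredients already established, exactly as in the proof of \Cref{thm: VDDLFC}. First, since $D$ is $2$-acyclic, \Cref{Ind of DT} identifies the directed tree complex with an independence complex:
\[
\DT(D)=\Ind(\tcg).
\]
This moves the question from multidigraphs to the simple graph $\tcg$. Second, \Cref{chordal graph correspondence}, under the same $2$-acyclicity hypothesis, shows that $\tcg$ is chordal. In other words, $\tcg$ has no induced subgraph isomorphic to $C_n$ for any $n\ge 4$.

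Third, I apply Woodroofe's criterion, \Cref{V.D of Ind}. A chordal graph has no induced cycles at all except possibly triangles. In particular, it has no induced cycle of length other than $3$ or $5$, so the hypothesis of \Cref{V.D of Ind} holds. Hence $\Ind(\tcg)$ is vertex decomposable, and so is $\DT(D)$. Via \eqref{eq:1}, this also gives that $\DT(D)$ is shellable and sequentially Cohen--Macaulay.

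There is no genuine obstacle here, since all the work was done in \Cref{Ind of DT} and \Cref{chordal graph correspondence}. The only points to check are bookkeeping. I would check that both cited results use the same hypothesis, namely $2$-acyclic rather than acyclic, which they do. I would also check the degenerate cases: if $E(D)=\emptyset$, then $\DT(D)=\{\emptyset\}$, which is a simplex and hence vertex decomposable by \Cref{def:VD}.

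It is also worth remarking that the statement's phrase ``directed forest complex'' refers to $\DT(D)$ as defined in \Cref{def: DTC}. This keeps the identification in \Cref{Ind of DT} consistent with what is being proved.
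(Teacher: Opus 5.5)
Your proposal is correct and follows the paper's proof exactly: $\DT(D)=\Ind(\tcg)$ by \Cref{Ind of DT}, $\tcg$ is chordal by \Cref{chordal graph correspondence}, and Woodroofe's \Cref{V.D of Ind} then gives vertex decomposability. Your extra checks are harmless bookkeeping that the paper leaves implicit: a chordal graph has no induced cycles other than triangles, so it meets Woodroofe's hypothesis, and the edgeless case gives $\DT(D)=\{\emptyset\}$.
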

\begin{proof}
By \Cref{Ind of DT}, we have $\DT(D)=\Ind(\tcg)$.
By \Cref{chordal graph correspondence}, $\tcg$ is chordal.
The result now follows from \Cref{V.D of Ind}. 
\end{proof}

Note that the above method does not work for all multidigraphs, particularly when the underlying graph of $D$ has a directed cycle as a subgraph.
We therefore turn to a direct combinatorial analysis of the directed tree complex.
The next proposition illustrates that vertex decomposability can still be obtained for certain multidigraphs. 

\begin{proposition}\label{prop: V.D for DT(Cn)}
Let $D$ be a multidigraph whose underlying graph $D^{\mathrm{un}}$ is a cycle of length $n \ge 3$.
Suppose there exists a pair of adjacent vertices $\{u,v\} \subseteq V(D)$ such that either
$\overrightarrow{uv} \notin E(D)$ or $\overrightarrow{vu} \notin E(D)$.
Then $\DT(D)$ is vertex decomposable.
\end{proposition}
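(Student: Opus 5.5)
We argue by induction on $|E(D)|$, using \Cref{lem:lk and del in DT} to identify $\lk_{\DT(D)}(e)=\DT(D_{\downarrow e})$ and $\del_{\DT(D)}(e)=\DT(D-e)$. When either of these multidigraphs is no longer of the cyclic type, we fall back on \Cref{cor: V.D for DT(D)}.

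\textbf{Setup.} Label $D^{\mathrm{un}}$ as the cycle $v_1v_2\cdots v_nv_1$, with $u=v_1$ and $v=v_2$. By symmetry we may assume that every edge between $u$ and $v$ is oriented $\overrightarrow{uv}$. The key observation is that any directed cycle of length $\ge 3$ in $D$ must traverse all of $D^{\mathrm{un}}$. Since no edge runs from $v$ to $u$, such a cycle is necessarily of the form $v_1\to v_2\to\cdots\to v_n\to v_1$. Hence $\DT(D)$ consists of the independent sets of $\tcg$ minus the sets that contain a full forward directed $n$-cycle. In particular, as soon as one edge class of $D^{\mathrm{un}}$ disappears, the underlying graph becomes a forest. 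Then $D$ is trivially $2$-acyclic and $\DT(D)$ is vertex decomposable by \Cref{cor: V.D for DT(D)}. This also covers the degenerate small cases.

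\textbf{Choice of $e$ and the two branches.} Take $e$ to be an edge $\overrightarrow{uv}$.
\begin{enumerate}
\item \emph{Deletion.} If $e$ has a parallel copy, then $D-e$ again satisfies the hypotheses of \Cref{prop: V.D for DT(Cn)} with fewer edges, and induction applies. Otherwise $(D-e)^{\mathrm{un}}$ is a path, and \Cref{cor: V.D for DT(D)} applies.
\item \emph{Link.} In $D_{\downarrow e}$ all edges with target $v$ are removed, including every edge $\overrightarrow{v_3v}$, together with all edges between $u$ and $v$. Then $u$ and $v$ are identified into a single vertex $w$. The result is either a multidigraph whose underlying graph is a cycle of length $n-1\ge 3$, or one whose underlying graph is a forest. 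In the cycle case, the pair $\{w,v_3\}$ carries no edge $\overrightarrow{v_3w}$, so the hypothesis holds again and induction applies. In the forest case (in particular whenever $n=3$), \Cref{cor: V.D for DT(D)} applies.
\end{enumerate}
The base cases, namely the empty complex and a simplex, are immediate.

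\textbf{Shedding condition (main obstacle).} It remains to show that $e$ is a shedding vertex, that is, that no facet $\sigma$ of $\DT(D-e)$ satisfies $\sigma\cup\{e\}\in\DT(D)$.
\begin{itemize}
\item If $e$ has a parallel copy $e'$, this is easy. Any facet $\sigma$ of $\DT(D-e)$ with $\sigma\cup\{e\}$ a directed forest could equally be enlarged by $e'$, contradicting maximality of $\sigma$.
\item So the difficulty is when $e$ is the unique edge $\overrightarrow{uv}$. Here $\sigma\cup\{e\}$ fails to be a face exactly when $\sigma$ contains an in-edge of $v$, or $\sigma$ contains the forward path $v=v_2\to v_3\to\cdots\to v_n\to v_1$.
\end{itemize}
For the unique-edge case, I would first use maximality of $\sigma$ as follows. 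If $\sigma$ has no in-edge at $v$ and an edge $\overrightarrow{v_3v}$ exists, then adding it is blocked only by a $2$-cycle, i.e.\ by an edge $\overrightarrow{vv_3}\in\sigma$.

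Should this case analysis not close, I would change the choice of $e$. The fallback is to run the same argument with $e$ an edge whose target is a vertex $v_i$ of the cycle receiving edges from only one neighbour. Failing that, I would instead take $e$ to be the shedding vertex lying on the unique edge class that every forward directed $n$-cycle must use. I would then check the shedding property using the characterization above: facets containing no in-edge of $v$ must contain the entire forward path back to $u$.
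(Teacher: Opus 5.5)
\textbf{There is a genuine gap.} Your inductive bookkeeping is fine. For $e=\overrightarrow{uv}$, the deletion and the link either again satisfy the hypothesis or have a forest as underlying graph. Your parallel-copy argument for the shedding property is the same one the paper uses.

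The gap is the case where $e$ is the unique edge $\overrightarrow{uv}$. You leave this case open, and there $e$ need not be a shedding vertex at all. Take $n=4$ and $E(D)=\{\overrightarrow{v_1v_2},\overrightarrow{v_2v_3},\overrightarrow{v_3v_4},\overrightarrow{v_4v_3},\overrightarrow{v_4v_1}\}$, with $u=v_1$, $v=v_2$, $e=\overrightarrow{v_1v_2}$. The set $\sigma=\{\overrightarrow{v_4v_3},\overrightarrow{v_4v_1}\}$ is a facet of $\del_{\DT(D)}(e)=\DT(D-e)$: adding $\overrightarrow{v_2v_3}$ gives $v_3$ in-degree $2$, and adding $\overrightarrow{v_3v_4}$ creates a $2$-cycle. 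Yet $\sigma\cup\{e\}$ is a directed tree. Every class here contains a forward edge, so discarding the $2$-acyclic case does not help. Your own characterization confirms the failure: $\sigma$ has no in-edge at $v$ and misses the forward path. The obstruction sits at the $2$-cycle further along the cycle, which your maximality step at the class $\{v,v_3\}$ never sees.

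Your fallbacks do not repair this. The rule ``an edge whose target receives edges from only one neighbour'' also allows $\overrightarrow{v_4v_1}$. That edge fails in the same way, with the facet $\{\overrightarrow{v_1v_2},\overrightarrow{v_4v_3}\}$ of $\DT(D-\overrightarrow{v_4v_1})$. The last fallback presupposes the shedding vertex it is meant to produce.

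The missing idea is to put the shedding vertex inside a $2$-cycle. The paper proceeds as follows:
\begin{enumerate}
\item An edge with a parallel copy anywhere in $D$ is shedding.
\item If $D$ has neither parallel edges nor $2$-cycles, it is a directed $n$-cycle or acyclic, handled by \Cref{lem: directed cycle} and \Cref{cor: V.D for DT(D)}.
\item Otherwise it takes a $2$-cycle adjacent to a one-directional class and checks Cases I--IV.
\end{enumerate}

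In your convention, every directed $n$-cycle runs forward, and you may assume each class has a forward edge (otherwise $D$ is $2$-acyclic). Translated this way, the edge the paper picks in Cases II--IV is always the backward edge $e=\overrightarrow{ba}$ of a $2$-cycle whose forward edge is $\overrightarrow{ab}$. Let $\sigma$ be a facet of $\del_{\DT(D)}(e)$.
\begin{enumerate}
\item If the class before $a$ is the single edge $\overrightarrow{xa}$, maximality forces $\overrightarrow{ab}$ or $\overrightarrow{xa}$ into $\sigma$.
\item If the class after $b$ is the single edge $\overrightarrow{by}$, then either $\overrightarrow{ab}\in\sigma$, or $\overrightarrow{ab}$ is the only possible in-edge of $b$. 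In the latter case maximality forces the whole forward path from $b$ to $a$ into $\sigma$, hence an in-edge of $a$.
\end{enumerate}
In both cases $\sigma\cup\{e\}\notin\DT(D)$.

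Moreover, $D_{\downarrow e}$ loses every in-edge of $a$, so the other class at $a$ becomes one-directional or disappears. Meanwhile $D-e$ keeps the underlying cycle with $\{a,b\}$ now one-directional. With this choice of $e$ your induction goes through unchanged.
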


\begin{proof}
Fix $n \ge 3$. We prove the result by induction on the number of edges $m$ of $D$.
Since $D$ is a multidigraph whose underlying graph $D^{\mathrm{un}}$ is a cycle of length $n$,
we have $m \ge n$.

For $m = n$, the graph $D$ is a simple digraph, which is either cyclic or acyclic.
Hence, by \Cref{lem: directed cycle} and \Cref{cor: V.D for DT(D)},
$\DT(D)$ is vertex decomposable. Assume that $\Dlf(D')$ is vertex decomposable whenever $D'$ is a multidigraph satisfying the property and having strictly fewer than $m$ edges. 

Since both multidigraphs $D_{\downarrow e}$ and $D-e$ have strictly fewer edges than $D$, and each of them either satisfies the required property or has an underlying graph that is a tree, it follows from the induction hypothesis or from \Cref{cor: V.D for DT(D)} that $DT(D_{\downarrow e})$ and $\DT(D-e)$ are vertex decomposable.
Therefore, to show that $\DT(D)$ is vertex decomposable, it is enough to find a shedding vertex of $\DT(D)$.

Fix two parallel edges $e,f \in E(D)$, then $s(e)=s(f)$ and $t(e)=t(f)$.  
We claim that $e$ is a shedding vertex. 
If not, then there exists a facet $\sigma \in \del_{\DT(D)}(e)$ such that $\sigma \cup \{e\} \in \DT(D)$. However, since $e$ and $f$ have the same source and target, we also have
$\sigma \cup \{f\} \in \DT(D)$, which implies that
$\sigma \cup \{f\} \in \del_{\DT(D)}(e)$.
This contradicts the assumption that $\sigma$ is a facet of $\del_{\DT(D)}(e)$.

Therefore, we may assume that $D$ is a digraph satisfying the hypothesis.
If $D$ is simple, then the result follows.
Otherwise, suppose that $D$ contains a $2$-cycle between vertices $v_1$ and $v_2$; that is,
\[
\overrightarrow{v_1v_2}, \overrightarrow{v_2v_1} \in E(D).
\]
Since there exists a pair of adjacent vertices $\{u,v\} \subseteq V(D)$ such that either
$\overrightarrow{uv} \notin E(D)$ or $\overrightarrow{vu} \notin E(D)$, it suffices to verify the shedding vertex condition by considering the following four cases.

\begin{figure}[H]
\centering
\begin{subfigure}{0.24\textwidth}
\centering
\begin{tikzpicture}[scale=0.9]
    \node (a) at (0,0) {$v_1$};
    \node (c) at (0,-2) {$v_3$};
    \node (b) at (2,0) {$v_2$};
    \node (d) at (2,-2) {$v_4$};

    \draw[->] (0.3,0)--(1.7,0);
    \draw[->] (1.7,0.1) -- (0.3,0.1);
    \draw[->] (a)--(c);
    \draw[->] (b)--(d);
\end{tikzpicture}
\caption{Case I}\label{fig: Case I}
\end{subfigure}
\hfill
\begin{subfigure}{0.24\textwidth}
\centering
\begin{tikzpicture}[scale=0.9]
    \node (a) at (0,0) {$v_1$};
    \node (c) at (0,-2) {$v_3$};
    \node (b) at (2,0) {$v_2$};
    \node (d) at (2,-2) {$v_4$};

    \draw[->] (0.3,0)--(1.7,0);
    \draw[->] (1.7,0.1) -- (0.3,0.1);
    \draw[->] (c)--(a);
    \draw[->] (b)--(d);
\end{tikzpicture}
\caption{Case II}\label{fig: Case II}
\end{subfigure}
\hfill
\begin{subfigure}{0.24\textwidth}
\centering
\begin{tikzpicture}[scale=0.9]
    \node (a) at (0,0) {$v_1$};
    \node (c) at (0,-2) {$v_3$};
    \node (b) at (2,0) {$v_2$};
    \node (d) at (2,-2) {$v_4$};

    \draw[->] (0.3,0)--(1.7,0);
    \draw[->] (1.7,0.1) -- (0.3,0.1);
    \draw[->] (c)--(a);
    \draw[->] (2,-0.3)--(2,-1.7);
    \draw[->] (2.1,-1.7) -- (2.1,-0.3);
\end{tikzpicture}
\caption{Case III}\label{fig: Case III}
\end{subfigure}
\hfill
\begin{subfigure}{0.24\textwidth}
\centering
\begin{tikzpicture}[scale=0.9]
    \node (a) at (0,0) {$v_1$};
    \node (c) at (0,-2) {$v_3$};
    \node (b) at (2,0) {$v_2$};
    \node (d) at (2,-2) {$v_4$};

    \draw[->] (0.3,0)--(1.7,0);
    \draw[->] (1.7,0.1) -- (0.3,0.1);
    \draw[->] (a)--(c);
    \draw[->] (2,-0.3)--(2,-1.7);
    \draw[->] (2.1,-1.7) -- (2.1,-0.3);
\end{tikzpicture}
\caption{Case IV}\label{fig: Case IV}
\end{subfigure}
\caption{}
\label{fig:multidigraph-cases}
\end{figure}

\medskip
\noindent
\textbf{Case I:} 
$\overrightarrow{v_1v_3}, \overrightarrow{v_2v_4} \in E(D)$ and 
$\overrightarrow{v_3v_1}, \overrightarrow{v_4v_2} \notin E(D)$, 
or 
$\overrightarrow{v_3v_1}, \overrightarrow{v_4v_2} \in E(D)$ and 
$\overrightarrow{v_1v_3}, \overrightarrow{v_2v_4} \notin E(D)$.

In this case, $D$ is a multidigraph with no directed cycle of length at least $3$. Hence, by 
\Cref{cor: V.D for DT(D)}, the complex $\DT(D)$ is vertex decomposable.

\medskip
\noindent
\textbf{Case II:} 
$\overrightarrow{v_3v_1}, \overrightarrow{v_2v_4} \in E(D)$ and 
$\overrightarrow{v_1v_3}, \overrightarrow{v_4v_2} \notin E(D)$.

We claim that $\overrightarrow{v_2v_1}$ is a simplicial vertex in $\DT(D)$.  
Indeed, let $\sigma$ be a facet of 
$\del_{\DT(D)}(\overrightarrow{v_2v_1})$. Then either
$\overrightarrow{v_1v_2} \in \sigma$ or 
$\overrightarrow{v_3v_1} \in \sigma$. In both cases,
\[
\sigma \cup \{\overrightarrow{v_2v_1}\} \notin \DT(D).
\]
Therefore, $\overrightarrow{v_2v_1}$ is a simplicial vertex of $\DT(D)$.

\medskip
\noindent
\textbf{Case III:} 
$\overrightarrow{v_3v_1}, \overrightarrow{v_2v_4}, \overrightarrow{v_4v_2} \in E(D)$ and 
$\overrightarrow{v_1v_3} \notin E(D)$.

Using the same argument as in Case~II, we again conclude that
$\overrightarrow{v_2v_1}$ is a simplicial vertex in $\DT(D)$.

\medskip
\noindent
\textbf{Case IV:} 
$\overrightarrow{v_1v_3}, \overrightarrow{v_2v_4}, \overrightarrow{v_4v_2} \in E(D)$ and 
$\overrightarrow{v_3v_1} \notin E(D)$.

In this case, $\overrightarrow{v_1v_2}$ is a simplicial vertex of $\DT(D)$.  
Let $\sigma$ be a facet of 
$\del_{\DT(D)}(\overrightarrow{v_1v_2})$.

If $\overrightarrow{v_2v_1} \in \sigma$, then 
\[
\sigma \cup \{\overrightarrow{v_1v_2}\} \notin \DT(D).
\]

Now assume that $\overrightarrow{v_2v_1} \notin \sigma$.  
Since the in-degree of $v_1$ is equal to $1$, this can occur only if the underlying graph of $D[\sigma \cup \{\overrightarrow{v_2v_1}\}]$ is a cycle. Consequently, the underlying graph of
$D[\sigma \cup \{\overrightarrow{v_1v_2}\}]$ is also a cycle, and hence
\[
\sigma \cup \{\overrightarrow{v_1v_2}\} \notin \DT(D).
\]

Therefore, $\overrightarrow{v_1v_2}$ is a simplicial vertex in $\DT(D)$.

This completes the proof. 
\end{proof}

In \Cref{prop: V.D for DT(Cn)}, when $n = 3$ or $4$, the complex $\DT(D)$ is vertex decomposable for every multidigraph $D$ whose underlying graph $D^{\mathrm{un}}$ is an $n$-cycle.
For $n \ge 5$,\; $\DT(D)$ is vertex decomposable if and only if there is a pair of adjacent vertices $\{u,v\} \subseteq V(D)$ such that either $\overrightarrow{uv} \notin E(D)$ or $\overrightarrow{vu} \notin E(D)$.
This condition is necessary, since otherwise $\DT(D)$ has no shedding vertex.

It suffices to show that the complex associated with the double directed cycle $P_n$, defined in \Cref{Ex 2}, has no shedding vertex.
Consider $v = e_1$. The face \[ \sigma = \{e_2, e_3, \ldots, e_{n-3}, e_{n-2}', e_n\} \] is a facet of $\del_{\DT(P_n)}(e_1)$, but not a facet of $\DT(P_n)$, since $\sigma \cup \{e_1\} \in \DT(P_n)$. Hence, $e_1$ is not a shedding vertex of $\DT(P_n)$. We conclude that, by symmetry, $\DT(P_n)$ has no shedding vertex. 

\section{Going beyond pairwise conflicts}\label{sec:hyperg}

In the previous two sections, we associated to a multidigraph $D$ two simple graphs, the linear-forest conflict graph and the tree conflict graph, and showed that under the assumption of $2$-acyclicity the directed linear forest complex and the directed tree complex coincide with the independence complexes of these graphs.
This translation allowed us to apply structural results from the theory of independence complexes of graphs, in particular chordality criteria, to obtain vertex decomposability and shellability results.

The key feature underlying this approach is that, for $2$-acyclic multidigraphs, all minimal obstructions to being a directed linear forest or a directed tree are detected by pairwise incompatibilities between edges. 
Directed cycles of length at least three are excluded by assumption, while $2$-cycles and degree violations are ruled out by local adjacency conditions in the conflict graphs.

For general multidigraphs, however, this pairwise framework is no longer sufficient. 
Directed cycles of length at least three give rise to higher-order obstructions that cannot be encoded using edges of a simple graph alone. 
Consequently, the directed linear forest complex and the directed tree complex can no longer be realized as independence complexes of graphs once the $2$-acyclicity assumption is dropped.

In this section, we address this limitation by introducing conflict hypergraphs associated with a multidigraph $D$. 
The vertices of these hypergraphs are again the edges of $D$ but hyperedges now encode minimal forbidden configurations, including directed cycles of arbitrary length. 
We show that, for arbitrary multidigraphs, the directed linear forest complex and the directed tree complex can be realized as independence complexes of their respective conflict hypergraphs.

This hypergraph perspective allows us to go beyond acyclicity assumptions and provides a natural framework in which higher-order obstructions are treated uniformly. 
Moreover, by relating these conflict hypergraphs to the notion of $W$-chordality, we obtain shellability and sequential Cohen–Macaulayness results that extend the graph-based theory developed earlier.

We now recall the basic terminology concerning hypergraphs, their independence complexes, and the notion of $W$-chordality, which will serve as the main structural tool in extending shellability results beyond the $2$-acyclic case.

A \emph{hypergraph} $\H$ on a vertex set $V(\H)$ is a set of subsets
of $V(\H)$ called \emph{hyperedges} of $\H$ such that if $e_{1}$ and $e_{2}$ are distinct hyperedges of $\H$
then $e_{1}\not\subset e_{2}$. A $d$-hyperedge is a hyperedge consisting of exactly $d$ vertices, and a hypergraph is \emph{$d$-uniform} if every hyperedge has exactly $d$ vertices; otherwise it is a \emph{non-uniform} hypergraph

An \emph{independent set } of $\H$ is a subset of $V$ containing
no hyperedge. The \emph{independence complex} of $\H$, denoted $\Ind(\H)$, is the simplicial complex whose vertex set is $V(\H)$ and whose faces are exactly the independent sets of $\H$. 

Given a hypergraph $\H$, there are two ways of removing a vertex
that are of interest. Let $v\in V(\H)$. The \emph{deletion}
$\H \setminus v$ is the hypergraph on the vertex set $V(\H)\setminus\{v\}$
with hyperedges $\{e\,:\, e \in E(\H) \text{~and~} v\notin e\}$.
The \emph{contraction} $\H/v$ is the hypergraph on the vertex
set $V(\H)\setminus\{v\}$ with hyperedges the minimal sets
of $\{e\setminus\{v\}\,:\, e \in E(\H)\}$.
Thus, $\H\setminus v$ deletes all hyperedges containing $v$,
while $\H/v$ removes $v$ from each hyperedge containing it
and then removes any redundant hyperedges. A hypergraph $\H'$ obtained from $\H$ by repeated deletions and/or contractions is called a \emph{minor} of $\H$.

\begin{definition}\label{def: simplicial vertex of hypergraph} 
Let $\H$ be a hypergraph.
A vertex $v$ of $\H$ is \emph{simplicial} if for every
two circuits $e_{1}$ and $e_{2}$ of $\H$ that contain
$v$, there is a third circuit $e_{3}$ such that $e_{3}\subseteq(e_{1}\cup e_{2})\setminus\{v\}$.
\end{definition}

\begin{definition}\label{def: chordal hypergraph}
A hypergraph $\H$ is \emph{W-chordal} if every minor of $\H$
has a simplicial vertex. 
\end{definition}

\begin{theorem}[{\cite[Theorem 1.1]{RW11}}]\label{them: SCH}
If $\H$ is a W-chordal hypergraph, then the independence complex $\Ind(\H)$ is shellable and hence sequentially Cohen-Macaulay.
\end{theorem}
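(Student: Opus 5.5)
This is cited from \cite[Theorem 1.1]{RW11}, but I would reprove it by induction on $|V(\H)|$, using the Björner–Wachs notion of a \emph{shedding face}. Recall that if $\Delta$ has a face $\sigma$ such that $\lk_\Delta(\sigma)$ and $\del_\Delta(\sigma)$ are both shellable, and every facet of $\del_\Delta(\sigma)$ is a facet of $\Delta$, then $\Delta$ is shellable. The plan is as follows.
\begin{enumerate}
\item Since $\H$ is a minor of itself, W-chordality gives a simplicial vertex $v$.
\item If $v$ lies in no hyperedge, then $\Ind(\H)$ is a cone over $\Ind(\H\setminus v)$. Since $\H\setminus v$ is again W-chordal, induction finishes this case.
\item Otherwise fix a hyperedge $e\ni v$, put $\sigma=e\setminus\{v\}$, and show that $\sigma$ is a shedding face of $\Ind(\H)$. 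Note $\sigma$ is independent because hyperedges are pairwise incomparable.
\end{enumerate}
The final implication to sequential Cohen–Macaulayness is \Cref{eq:1}.

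\textbf{The link.} I would first identify $\lk(\sigma)$ with the independence complex of a minor. A set $F$ disjoint from $\sigma$ lies in $\lk(\sigma)$ exactly when $F\cup\sigma$ contains no hyperedge. This is the independence complex of the hypergraph obtained by contracting the vertices of $\sigma$ one at a time. That hypergraph is a minor of $\H$, hence W-chordal, and has fewer vertices, so induction gives shellability. Minimality causes no trouble here, since contraction only removes supersets.

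\textbf{The deletion.} $\del(\sigma)$ consists of the faces not containing $\sigma$. I would show it equals $\Ind(\H')$, where $\H'$ is formed from $\H$ by adding $\sigma$ as a hyperedge and discarding every hyperedge that contains $\sigma$ (in particular $e$). The key point is that $\H'$ is again a minor, or at least W-chordal. The guess is that $\H'$ is isomorphic to the minor obtained by contracting $v$ and then restricting appropriately. The simplicial property of $v$ should be what ensures that the sets $e'\setminus\{v\}$ coming from other hyperedges $e'\ni v$ are already redundant, apart from the one generated by $\sigma$. If $\H'$ turns out not to be a minor, the fallback is to verify W-chordality of $\H'$ directly: every minor of $\H'$ would be shown to be a minor of $\H$ with possibly one extra hyperedge, and the simplicial vertices of $\H$ would be checked to persist.

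\textbf{Shedding condition.} Let $F$ be a facet of $\del(\sigma)$, and suppose $F$ is not a facet of $\Ind(\H)$. Then $F\cup\{x\}\in\Ind(\H)$ for some $x$, and $F\cup\{x\}$ necessarily contains $\sigma$; in particular $v\notin F\cup\{x\}$. The aim is to contradict the maximality of $F$ by showing $F\cup\{v\}$ is independent and does not contain $\sigma$. If instead $F\cup\{v\}$ contains some hyperedge $e'\ni v$, apply simpliciality to $e$ and $e'$. This yields a hyperedge $e_3\subseteq (e\cup e')\setminus\{v\}\subseteq F\cup\{x\}$, contradicting the independence of $F\cup\{x\}$.

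The main obstacle is the deletion step: showing that $\H'$ stays within a class closed under the induction, namely W-chordal hypergraphs, since adding the hyperedge $\sigma$ is not obviously a minor operation.
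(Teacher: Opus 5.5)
The paper gives no proof of this theorem; it quotes it from Woodroofe. Several parts of your framework are correct:
\begin{itemize}
\item the face-shedding lemma holds for $\sigma\neq\emptyset$;
\item $\lk_{\Ind(\H)}(\sigma)=\Ind(\H/\sigma)$ is the independence complex of a proper minor;
\item your check that every facet of $\del(\sigma)$ is a facet of $\Ind(\H)$ (add $v$, then apply simpliciality to $e$ and $e'$) is right.
\end{itemize}
The step you flag, however, is a genuine gap, and your proposed way to close it fails. The clutter $\H'$ (add $\sigma$, discard supersets of $\sigma$) need not be W-chordal, and in general it is not a minor of $\H$.

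\textbf{Counterexample.} Take vertices $1,2,3,4,v$ and hyperedges $\{1,2\},\{2,3\},\{3,4\},\{1,4,v\},\{2,4,v\}$.
\begin{itemize}
\item $v$ is simplicial, since $\{1,2\}\subseteq\{1,2,4\}$.
\item $\H$ is W-chordal. Deletion and contraction commute, and simpliciality of $v$ survives deleting or contracting other vertices, so every minor keeping $v$ has a simplicial vertex. Every other minor is a minor of $\H\setminus v$ (the path with edges $\{1,2\},\{2,3\},\{3,4\}$) or of $\H/v$ (the chordal graph $K_4$ minus $\{1,3\}$). Minors of chordal graphs are induced subgraphs, possibly with loops, and always have a simplicial vertex.
\item Choosing $e=\{1,4,v\}$ gives $\H'=\{\{1,2\},\{2,3\},\{3,4\},\{1,4\},\{2,4,v\}\}$. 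Then $\H'\setminus v$ is the $4$-cycle, which has no simplicial vertex, so $\H'$ is not W-chordal.
\end{itemize}
This is exactly where your redundancy heuristic breaks. Simpliciality puts a hyperedge inside $(e\setminus\{v\})\cup(e'\setminus\{v\})$, not inside $e'\setminus\{v\}$. So $e'\setminus\{v\}$ becomes redundant only once $\sigma$ is present, i.e.\ in the link, never in the deletion.

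\textbf{The fix.} Shed \emph{all} faces through $v$ in succession, not just one. Let $e_1,\dots,e_k$ be the hyperedges containing $v$, put $\sigma_i=e_i\setminus\{v\}$, and let $\H_i$ be the clutter of minimal members of $E(\H)\cup\{\sigma_1,\dots,\sigma_i\}$. Then $\Ind(\H_i)$ is the face-deletion of $\sigma_i$ from $\Ind(\H_{i-1})$, and:
\begin{itemize}
\item[(i)] $\sigma_i$ is a face of $\Ind(\H_{i-1})$, and the hyperedges of $\H_{i-1}$ through $v$ are exactly $e_i,\dots,e_k$. Also $v$ is still simplicial in $\H_{i-1}$, because a hyperedge of $\H$ inside $\sigma_a\cup\sigma_b$ contains a member of $\H_{i-1}$. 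So your facet argument applies verbatim at every step.
\item[(ii)] $\lk_{\Ind(\H_{i-1})}(\sigma_i)=\Ind(\H/\sigma_i)$. For $j<i$ there is a hyperedge of $\H$ inside $\sigma_i\cup\sigma_j$, so the constraint coming from $\sigma_j$ is already implied once $\sigma_i$ is in the face.
\item[(iii)] $\H_k$ has no hyperedge through $v$ and coincides with $\H/v$ on $V(\H)\setminus\{v\}$. Hence $\Ind(\H_k)$ is a cone with apex $v$ over $\Ind(\H/v)$.
\end{itemize}
Induction is now applied only to the proper minors $\H/\sigma_i$ and $\H/v$. Applying the shedding lemma for $i=k,k-1,\dots,1$ then yields shellability of $\Ind(\H)$, and the intermediate clutters $\H_i$ never need to be W-chordal.

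Separately, set aside the case of a singleton hyperedge $\{v\}$ first. There $\sigma=\emptyset$ and the lemma is vacuous, but $\Ind(\H)=\Ind(\H\setminus v)$.
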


\subsection{Conflict hypergraph for the directed tree complex}

We begin with the directed tree complex. The associated conflict hypergraph encodes precisely those edge sets that prevent a collection
of directed edges from forming a directed tree.

\begin{definition}\label{def: conflict hypergraph for DT}
Let $D$ be a multidigraph. We associate to $D$ a hypergraph $\tch$, called the \emph{tree conflict hypergraph}, whose vertex set is $E(D)$, that is, $V(\tch)=E(D)$. Its hyperedge set is the following set:

\[
\begin{aligned}
E(\tch) = &\{\sigma \subseteq E(D) : D[\sigma] \text{ is a directed cycle}\} \;\cup\; \bigl\{\{e,f\} : e \text{ and } f \text{ are parallel edges}\bigr\}\\
& \;\cup\; \bigl\{\{e,f\} : s(e) \neq s(f) \text{~and~} t(e) = t(f)\bigr\}.
\end{aligned}
\]
\end{definition}

\begin{lemma}\label{Ind for H}
    Let $D$ be a multidigraph. Then $\DT(D) = \Ind(\tch)$
\end{lemma}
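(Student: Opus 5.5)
The plan is to prove the equality by double inclusion, after first rewriting both sides as explicit conditions on a set $\sigma \subseteq E(D)$. By \Cref{def: DTC} and the definition of directed forest, $\sigma \in \DT(D)$ exactly when $D[\sigma]$ satisfies three conditions:
(a) it has no parallel edges, since a directed forest is a digraph;
(b) no two distinct edges of $\sigma$ share a target;
(c) $D[\sigma]$ contains no directed cycle as a subgraph.

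On the other side, $\sigma \in \Ind(\tch)$ exactly when $\sigma$ contains none of the hyperedges listed in \Cref{def: conflict hypergraph for DT}. The first step is to merge the two families of $2$-element hyperedges. Every pair $\{e,f\}$ with $t(e)=t(f)$ is a hyperedge: either $s(e)=s(f)$, and then $e,f$ are parallel, or $s(e)\neq s(f)$, and then the third family applies. Since parallel edges share a target, the $2$-element hyperedges are precisely the pairs of distinct edges with a common target. Hence $\sigma \in \Ind(\tch)$ iff:
(b') no two edges of $\sigma$ share a target;
(c') no subset $\tau \subseteq \sigma$ has $D[\tau]$ a directed cycle.

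The second step is to check that (a)+(b)+(c) is equivalent to (b')+(c').
\begin{itemize}
\item Conditions (b) and (b') are identical, and (b) implies (a), so (a) is redundant.
\item For (c) $\Leftrightarrow$ (c'): if $D[\sigma]$ contains a directed cycle $C$ as a subgraph, take $\tau = E(C) \subseteq \sigma$. Since $D[\tau]$ is the edge-induced subgraph, its vertex set is exactly the set of endpoints of edges of $C$, so $D[\tau]=C$ is a directed cycle.
\item Conversely, any $\tau \subseteq \sigma$ with $D[\tau]$ a directed cycle gives a directed cycle inside $D[\sigma]$.
\end{itemize}
Both inclusions follow. The $2$-cycles need no separate treatment: they appear as directed cycles of length $2$ in the first hyperedge family and are excluded by (c').

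The only delicate point, and the place I expect care to be needed, is the interplay between ``induced subgraph $D[\sigma]$'' (here edge-induced) and ``contains a directed cycle''. One must make sure that the hyperedge condition ``$D[\tau]$ is a directed cycle'' captures every cyclic obstruction. In particular, a cycle in $D[\sigma]$ could a priori use only some edges at a vertex, so one passes to its own edge set $\tau$ rather than to $\sigma$. A related check is that the hyperedge family is a genuine hypergraph in the sense above, meaning no hyperedge contains another, although this does not affect which sets are independent:
\begin{itemize}
\item a directed cycle has pairwise distinct targets, so it contains no target-sharing pair;
\item one directed cycle cannot properly contain another, since every vertex of a directed cycle has in- and out-degree $1$.
\end{itemize}
With these observations the equality $\DT(D)=\Ind(\tch)$ follows, with no acyclicity hypothesis on $D$.
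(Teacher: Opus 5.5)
Your proposal is correct and follows essentially the same route as the paper. Both arguments use a direct double inclusion that unwinds $\sigma\in\DT(D)$ and independence of $\sigma$ in $\tch$ into the same conditions on $D[\sigma]$: in-degree at most one everywhere, no parallel edges, and no directed cycle. Your merging of the two pair families into ``pairs with a common target'' and your passage from a cycle $C$ in $D[\sigma]$ to the hyperedge $E(C)\subseteq\sigma$ make precise steps that the paper's proof states without comment.
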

\begin{proof}
$(\subseteq)$
Let $\sigma \in \DT(D)$. Then $D[\sigma]$ is a directed tree.
Hence $D[\sigma]$ contains no directed cycle, no two parallel
edges, and no two edges with distinct sources and a common
target. Therefore $\sigma$ contains no hyperedge of $\tch$,
and so $\sigma$ is independent in $\tch$. Thus
$\sigma \in \Ind(\tch)$.

$(\supseteq)$
Let $\sigma$ be independent in $\tch$. Then $\sigma$ contains
no hyperedge of $\tch$. Hence $D[\sigma]$ has no directed
cycle, no parallel edges, and no pair of edges with distinct
sources and the same target. Consequently, every vertex has
in-degree at most $1$, and the digraph $D[\sigma]$ is acyclic. Therefore each connected component of $D[\sigma]$ is a directed tree, and hence
$\sigma \in \DT(D)$.
\end{proof}

We next describe how deletion and contraction in the tree
conflict hypergraph corresponds to natural operations on the
underlying multidigraph. These identities show that the
hypergraph operations required in the study of $W$-chordality
translate into deleting an edge or performing the link-type
reduction in the multidigraph. 

\begin{lemma}
Let $D$ be a multidigraph and $e\in E(D)$. For the tree conflict hypergraph, the following identities hold:
\begin{enumerate}
    \item $\tch \setminus \{e\} =  \H_{D-e}^{t}$
    \item $\tch / e = \H_{D_{\downarrow e}}^{t} \cup \big\{f : \{e,f\} \in E(\tch) \big\}$.
\end{enumerate}
\end{lemma}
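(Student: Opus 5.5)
Both identities are equalities of hypergraphs on the vertex set $E(D)\setminus\{e\}$. I will prove each by comparing hyperedge sets directly, sorting hyperedges of $\tch$ into the three types of \Cref{def: conflict hypergraph for DT}: directed cycles, parallel pairs, and ``same target, distinct sources'' pairs.

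\emph{(1)} A hyperedge of $\tch\setminus\{e\}$ is a hyperedge $h$ of $\tch$ with $e\notin h$. Each of the three defining conditions depends only on the source and target maps restricted to $h$. These maps are the same in $D$ and in $D-e$, so $h$ is a hyperedge of $\tch$ avoiding $e$ exactly when $h$ is a hyperedge of $\H^{t}_{D-e}$. There are no minimality issues here, because deletion does not modify hyperedges. This step is routine.

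\emph{(2)} For the contraction, the hyperedges of $\tch/e$ are the minimal members of $\{h\setminus\{e\}: h\in E(\tch)\}$. I will split according to whether $e\in h$.
\begin{itemize}
\item If $e\in h$ and $h=\{e,f\}$ is a $2$-hyperedge, then $f$ is parallel to $e$ or shares its target. This produces the singletons $\{f\}$ in the second term of the formula.
\item If $e\in h$ and $h$ is a directed cycle through $e$, then $h\setminus\{e\}$ is a directed path from $t(e)$ to $s(e)$. After identifying $s(e)$ with $t(e)$ in $D_{\downarrow e}$, this path becomes a directed cycle. The edges of $h\setminus\{e\}$ survive in $D_{\downarrow e}$, with one exception: a $2$-cycle $\{e,f\}$ leaves only $f$, which is again a singleton of the second term.
\item If $e\notin h$, then $h$ is a hyperedge on edges of $D$. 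Either $h$ survives as the corresponding hyperedge of $\H^{t}_{D_{\downarrow e}}$, or $h$ contains an edge $f$ that was deleted in forming $D_{\downarrow e}$. In the latter case $f$ has target $t(e)$ or endpoints $\{s(e),t(e)\}$, so $\{f\}$ is already a contracted hyperedge and $h$ is not minimal.
\end{itemize}
Conversely, I must check that every hyperedge of $\H^{t}_{D_{\downarrow e}}$ arises this way. Consider a directed cycle in $D_{\downarrow e}$. If it avoids the merged vertex, it is a directed cycle of $D$. If it passes through the merged vertex $w$, it enters $w$ via an edge whose target in $D$ is $s(e)$, since edges into $t(e)$ were deleted. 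It leaves $w$ via an edge with source $s(e)$ or $t(e)$. In the first case the cycle is already a directed cycle of $D$. In the second case adding $e$ gives a directed cycle of $D$ through $e$, whose contraction is the given cycle.

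The main obstacle is the pairwise hyperedges created or destroyed by the identification. For example, two edges $g,g'$ with targets $s(e)$ and $t(e)$ are not a conflict pair in $D$ but would share a target in $D_{\downarrow e}$. This case is harmless, because $g'$ is deleted in $D_{\downarrow e}$ and $\{g'\}$ is already a singleton hyperedge. The other direction matters too: a pair $\{g,g'\}$ with common target $s(e)$ survives unchanged. Finally, the right-hand side is a union of two families and must itself be read as a hypergraph, so I will take minimal elements. The identity is then that minimal elements on both sides agree, and I would verify this carefully edge-type by edge-type.
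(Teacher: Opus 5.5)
Your proposal is correct. It uses the same hyperedge-by-hyperedge comparison as the paper: two-element hyperedges through $e$ give the singletons, and cycles through $e$ become cycles of $D_{\downarrow e}$. It is in fact more complete than the paper's argument, which only considers hyperedges containing $e$ and never checks the reverse inclusion or minimality.

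When you write out the pair cases, include the one type change. The edges $\overrightarrow{s(e)v}$ and $\overrightarrow{t(e)v}$ form a same-target pair in $D$ but a parallel pair in $D_{\downarrow e}$, so they remain a hyperedge. Also, taking minimal elements on the right-hand side is unnecessary, because the singletons live on the deleted edges, which are disjoint from $E(D_{\downarrow e})$.
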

\begin{proof}
Recall that $V(\tch)=E(D)$ and that hyperedges of $\tch$
correspond to directed cycles of $D$ together with conflicting
pairs of edges.

\medskip
\noindent\emph{(1)}
Since
\[
V(\tch\setminus\{e\})=E(D)\setminus\{e\}=E(D-e)
=V(\H_{D-e}^{t}),
\]
it suffices to compare hyperedges.
A set $\sigma\subseteq E(D)\setminus\{e\}$ is a hyperedge of
$\tch\setminus\{e\}$ if and only if it induces either a directed cycle or a
conflicting pair in $D$. As deleting $e$ does not affect configurations
avoiding $e$, the same condition characterizes hyperedges of
$\H_{D-e}^{t}$. Hence
\[
\tch\setminus\{e\}=\H_{D-e}^{t}.
\]

\medskip
\noindent\emph{(2)}
By the definition of hypergraph contraction,
hyperedges of $\tch/e$ are the minimal sets among
\[
\{\sigma\setminus\{e\}:\sigma\in E(\tch),\, e\in\sigma\}.
\]
If $\sigma$ induces a forbidden configuration containing $e$,
then after removing edges conflicting with $e$ and performing the identification described in the construction of the link
graph $D_{\downarrow e}$, the set $\sigma\setminus\{e\}$ induces
precisely a forbidden configuration in $D_{\downarrow e}$.
These yield the hyperedges of $\H_{D_{\downarrow e}}^{t}$.
If $\sigma=\{e,f\}$ is a conflicting pair, contraction produces the singleton $\{f\}$. These are exactly the hyperedges $\{f\}$ with $\{e,f\}\in E(\tch)$.
Therefore,
\[
\tch/e=\H_{D_{\downarrow e}}^{t}\cup
\bigl\{\{f\}:\{e,f\}\in E(\tch)\bigr\}.
\]
\end{proof}

\begin{example}
    Let $D = P_3$ be the multidigraph shown in \Cref{fig: double directed cycle}. For $e_1 \in E(P_3)$, we have $\{e_1,e_1'\}, \{e_1,e_2'\} \in E(\tch)$, whereas $D[\{e_1',e_2'\}]$ is a directed path and thus $\{e_1',e_2'\} \notin E(\tch)$. Consequently, $e_1$ is not a simplicial vertex of $\tch$. By symmetry, no edge of $P_3$ is a simplicial vertex of $\tch$. Note that $\DT(P_3)=\Ind(\tch)$ is vertex decomposable and therefore shellable, while $\tch$ itself is not $W$-chordal.
\end{example}

To relate the structural properties of the multidigraph $D$ to those of the hypergraph $\tch$, we next compare simplicial edges in $D$ with
simplicial vertices in the associated conflict hypergraph.
This correspondence will allow $W$-chordality of $\tch$ to be translated into chordality-type conditions on $D$.

\begin{definition}
Let $D$ be a multidigraph. An edge $e$ is said to be 
\emph{non-simplicial} in $D$ if there exists an edge 
$f \in E(D)$ with $t(e)=t(f)$ and $s(e) \neq s(f)$, and a 
directed cycle $D[\sigma]$ containing $e$, such that 
$s(f) \notin V(D[\sigma])$. 
Otherwise, $e$ is called a \emph{simplicial edge} of $D$.
\end{definition}

\begin{lemma}
Let $D$ be a multidigraph. Then an edge $e$ of $D$ is a simplicial
edge of $D$ if and only if $e$ is a simplicial vertex of $\tch$.
\end{lemma}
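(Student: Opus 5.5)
The plan is to unwind both definitions and compare them hyperedge by hyperedge. Here ``circuits'' in \Cref{def: simplicial vertex of hypergraph} means the hyperedges of $\tch$. By \Cref{def: conflict hypergraph for DT}, a hyperedge containing $e$ has one of three forms:
\begin{enumerate}
\item a directed cycle $\sigma\ni e$;
\item a parallel pair $\{e,f\}$;
\item a pair $\{e,f\}$ with $t(e)=t(f)$ and $s(e)\ne s(f)$ (a \emph{target conflict}).
\end{enumerate}
For each unordered pair $(e_1,e_2)$ of such hyperedges, we must decide whether $(e_1\cup e_2)\setminus\{e\}$ contains a hyperedge. The claim is that this can fail only when $e_1$ is a directed cycle $\sigma$ through $e$ and $e_2=\{e,f\}$ is a target conflict with $s(f)\notin V(D[\sigma])$. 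That configuration is exactly the definition of a non-simplicial edge, which gives the lemma.

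\textbf{Pairs that always have a third hyperedge.}
\begin{itemize}
\item Two pair-hyperedges $\{e,f\}$ and $\{e,g\}$, each parallel or a target conflict. Then $t(f)=t(g)=t(e)$. If $s(f)\ne s(g)$, then $\{f,g\}$ is a target conflict. If $s(f)=s(g)$, then $f,g$ are parallel. Either way $\{f,g\}$ is a hyperedge. (We take $f\neq g$, since the two hyperedges are distinct.)
\item A parallel pair $\{e,f\}$ with a cycle $\sigma$. Replacing $e$ by $f$ in $\sigma$ gives a directed cycle $(\sigma\setminus\{e\})\cup\{f\}\subseteq(\sigma\cup\{e,f\})\setminus\{e\}$.
\item Two distinct directed cycles $\sigma_1,\sigma_2$ through $e$. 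The union $\sigma_1\cup\sigma_2\setminus\{e\}$ still contains a closed directed walk: follow $\sigma_2$ from $t(e)$ around to $s(e)$, then $\sigma_1$ from $t(e)$ around to $s(e)$, giving two $t(e)\to s(e)$ directed paths that avoid $e$. If these paths differ, they diverge at some vertex and later meet again. At the first remeeting vertex, two distinct edges of the union enter with different sources, or they are parallel. Either way a pair-hyperedge lies inside $(\sigma_1\cup\sigma_2)\setminus\{e\}$. If instead some directed cycle survives in the union, that works too.
\end{itemize}

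\textbf{The critical case: a cycle $\sigma$ and a target conflict $\{e,f\}$.} Look for a hyperedge inside $(\sigma\setminus\{e\})\cup\{f\}$. The set $\sigma\setminus\{e\}$ is a directed path ending at $t(e)$, and its last edge $g$ satisfies $t(g)=t(e)=t(f)$.
\begin{itemize}
\item If $s(f)\in V(D[\sigma])$, then either $s(f)=s(g)$, making $\{f,g\}$ parallel, or $s(f)\ne s(g)$, making $\{f,g\}$ a target conflict. Both are hyperedges, so the condition holds. (Degenerate case: if $\sigma$ is a $2$-cycle, the path is the single edge $g$ with $s(g)=t(e)$; the same argument applies.)
\item If $s(f)\notin V(D[\sigma])$, then the edges of $\sigma\setminus\{e\}$ form a directed path. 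Adding $f$ gives a directed tree into $t(e)$ with two in-edges $g$ and $f$ having distinct sources. So $\{f,g\}$ is still a target conflict.
\end{itemize}

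This last point is where the naive equivalence seems to break, and I expect it to be the main obstacle. It suggests the intended reading is that $f$ is compared against the cycle containing $e$ with $s(f)$ chosen so that no pairwise conflict appears. In other words, the definition must be read so that the only failing configuration is the one where $\{f,g\}$ is not a hyperedge. The honest way to finish is to characterise exactly when some pair $(e_1,e_2)$ has no third hyperedge, and then show this coincides with the paper's ``non-simplicial'' configuration. For the ``only if'' direction, the non-simplicial data $(f,\sigma)$ should exhibit the witnesses $e_1=\sigma$ and $e_2=\{e,f\}$. For the ``if'' direction, the case analysis above shows that all other pairs are fine.

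I would carry out this last step carefully, checking the subcases where $g$ and $f$ share a source or where $f$ is itself an edge of $\sigma$. I expect some adjustment of the definition (for instance, requiring that $f$ be the unique in-edge conflict) to be needed to make the equivalence exact.
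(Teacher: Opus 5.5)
Your strategy matches the paper's. You list the hyperedges of $\tch$ through $e$, show that every pair except (directed cycle, target conflict) has a third hyperedge, and identify the failing pair with the non-simplicial configuration. Your pair/pair, parallel/cycle and cycle/cycle cases are correct. In the cycle/cycle case, the opening ``closed directed walk'' sentence is wrong, since both segments run from $t(e)$ to $s(e)$, but the divergence argument that follows works.

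The proposal breaks in the critical case because of an orientation error. If $D[\sigma]$ is a directed cycle through $e$, then $\sigma\setminus\{e\}$ is a directed path that \emph{starts} at $t(e)$ and ends at $s(e)$. The only edge of $\sigma$ entering $t(e)$ is $e$, so no $g\in\sigma\setminus\{e\}$ satisfies $t(g)=t(e)$. Your own $2$-cycle remark, $s(g)=t(e)$, already shows this. So the hyperedge $\{f,g\}$ you use in both subcases does not exist. Your conclusion that the statement needs a modified definition rests on this mistake, and as written neither direction is proved.

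Here is what actually happens. Let $t(f)=t(e)$ with $s(f)\neq s(e)$, and set $\tau=(\sigma\setminus\{e\})\cup\{f\}$. In $\tau$ every vertex has in-degree at most $1$, with $f$ the unique edge entering $t(e)$. Hence $\tau$ contains no parallel pair and no target conflict, and the only possible hyperedge inside it is a directed cycle through $f$.
\begin{itemize}
\item If $s(f)\in V(D[\sigma])$, then $s(f)$ is a vertex of the path other than $s(e)$, and other than $t(e)$ if there are no loops. Then $f$ followed by the segment of the path from $t(e)$ to $s(f)$ is a directed cycle in $\tau$; this is the required third hyperedge.
\item If $s(f)\notin V(D[\sigma])$, then $\tau$ is the directed path $s(f)\to t(e)\to\cdots\to s(e)$ and contains no hyperedge. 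So $\sigma$ and $\{e,f\}$ witness that $e$ is not a simplicial vertex of $\tau$'s ambient hypergraph $\tch$.
\end{itemize}
The second configuration is verbatim the definition of a non-simplicial edge. So the lemma holds as stated, with no adjustment. This is exactly the paper's argument: $(\Rightarrow)$ uses the directed cycle and $(\Leftarrow)$ uses the directed path. Your instinct about the witnesses $\sigma$ and $\{e,f\}$ was right.

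Incidentally, the paper's own cycle/cycle argument has the same slip: it claims $\sigma\setminus\{e\}$ and $\sigma'\setminus\{e\}$ contain edges entering $t(e)$. Your divergence approach is the sound one. Cleanly stated: trace both paths backward from $s(e)$ until two distinct edges enter a common vertex, which gives a parallel pair or a target conflict.
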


\begin{proof}
Recall that the vertices of $\tch$ are the edges of $D$, and its
hyperedges correspond to directed cycles together with pairs
$\{e,f\}$ where either $e$ and $f$ are parallel or
$t(e)=t(f)$ and $s(e)\neq s(f)$.

$(\Rightarrow)$
Suppose first that $e$ is not contained in any directed cycle.
Then the only hyperedges of $\tch$ containing $e$ are of the
form $\{e,f\}$ where $e$ and $f$ are parallel or
$t(e)=t(f)$ with $s(e)\neq s(f)$. If
$\{e,f\},\{e,f'\}\in E(\tch)$, then $f$ and $f'$ have a common
target, and hence $\{f,f'\}\in E(\tch)$. Thus $e$ is a
simplicial vertex of $\tch$.

Now suppose that $e$ belongs to directed cycles.
Let $D[\sigma]$ and $D[\sigma']$ be two directed cycles containing $e$, where $\sigma,\sigma' \subseteq E(D)$.
Then there exist edges
$f \in \sigma \setminus\{e\}$ and $f' \in \sigma' \setminus\{e\}$
entering the vertex $t(e)$, so that
$t(f)=t(f')$. Hence $\{f,f'\}$ is a hyperedge of $\tch$.

Moreover, if $D[\sigma]$ is a directed cycle containing $e$ and
there exists an edge $f\in E(D)$ with $t(f)=t(e)$, then by the definition of a simplicial edge, there exists a vertex
$v\in V(D[\sigma])$ such that $s(f)=v$. Then we obtain a directed cycle contained in $(\sigma \cup\{f\})\setminus\{e\}$ and containing $f$.
Hence, $e$ is a simplicial vertex of $\tch$.

$(\Leftarrow)$
Conversely, suppose that $e$ is a simplicial vertex of $\tch$.
Assume that $e$ belongs to a directed cycle $D[\sigma]$. If there
exists an edge $f$ with $t(f)=t(e)$ and
$s(f)\neq v$ for every $v\in V(D[\sigma])$, then both $\sigma$ and
$\{e,f\}$ are hyperedges of $\tch$, whereas
$(\sigma \cup\{f\})\setminus\{e\}$ induces a directed path and
contains no hyperedge, contradicting the simpliciality of $e$
in $\tch$. Hence, such an edge $f$ cannot exist, and therefore
$e$ is a simplicial edge of $D$.
\end{proof}

\begin{definition}
    Let $D$ be a multidigraph. An edge $e$ in $D$ is called a \emph{cycle-piercing edge} if there is a directed cycle $D[\sigma]$ in $D$ and an edge $f \in \sigma$ such that $t(e) = t(f)$ but $s(e) \neq s(f)$.
\end{definition}

\begin{theorem}\label{thm: TCH-thm}
Let $D$ be a multidigraph that does not contain any cycle-piercing edge. Then $\tch$ is W-chordal. As a result, the directed tree complex $\DT(D)$ is shellable.
\end{theorem}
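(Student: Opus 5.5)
The plan is to verify the definition of W-chordality directly: every minor of $\tch$ should have a simplicial vertex. Shellability of $\DT(D)$ then follows from \Cref{Ind for H} together with \Cref{them: SCH}. There are three steps: show that the hypothesis forces every edge to be simplicial, show that the hypothesis survives the two reductions on $D$, and then describe all minors of $\tch$.

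\emph{Step 1 (the hypothesis gives simplicial vertices).} Suppose $D$ has no cycle-piercing edge, and let $e$ be any edge. Assume $e$ lies on a directed cycle $D[\sigma]$ and that $f$ is an edge with $t(f)=t(e)$ and $s(f)\neq s(e)$. Then $f$ is cycle-piercing, witnessed by the cycle $D[\sigma]$ and the edge $e\in\sigma$, which is impossible. So the condition defining a non-simplicial edge can never hold, and every edge of $D$ is a simplicial edge. By the preceding lemma, every vertex of $\tch$ is then a simplicial vertex. In particular $\tch$ itself has a simplicial vertex whenever it has a vertex at all.

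\emph{Step 2 (closure of the class).} Let $\mathcal C$ denote the class of multidigraphs without cycle-piercing edges. I would show that $\mathcal C$ is closed under $D\mapsto D-e$ and $D\mapsto D_{\downarrow e}$.
\begin{enumerate}
\item For $D-e$ this is immediate: deleting an edge creates no new directed cycles and no new common targets.
\item For $D_{\downarrow e}$, the identification of $s(e)$ and $t(e)$ can create new directed cycles. Each such cycle corresponds to a directed path from $t(e)$ to $s(e)$ in $D$, that is, to a directed cycle of $D$ through $e$.
\item All edges with target $t(e)$ have been deleted, and the remaining edges keep their sources and targets. Hence a piercing configuration in $D_{\downarrow e}$, given by an edge $g$, a cycle edge $h$ with $t(g)=t(h)$ and $s(g)\neq s(h)$, lifts to a piercing configuration in $D$ using the corresponding cycle of $D$.
\item The one point to check carefully is a target equal to the merged vertex. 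No surviving edge has target $t(e)$, so such a coincidence already takes place at $s(e)$ in $D$.
\end{enumerate}

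\emph{Step 3 (describing the minors, the main obstacle).} By the deletion/contraction lemma for $\tch$, deleting $e$ gives $\H^{t}_{D-e}$. Contracting $e$ gives $\H^{t}_{D_{\downarrow e}}$ together with singleton hyperedges $\{f\}$, one for each $f$ conflicting with $e$. By induction on the number of operations, I would show that every minor has the form
\[
\H^{t}_{D'}\cup\bigl\{\{f\}: f\in S\bigr\}, \qquad D'\in\mathcal C,
\]
possibly after removing non-minimal sets.

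The singleton hyperedges must be propagated through further deletions and contractions. Deleting a vertex of $S$ simply removes its singleton. Contracting a vertex of $S$ produces the empty hyperedge, giving a degenerate minor with void independence complex; I would dispose of this case separately.

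Given such a minor, I choose the simplicial vertex as follows.
\begin{enumerate}
\item If $S\neq\emptyset$, any $f\in S$ works. Minimality of hyperedges forces $\{f\}$ to be the only circuit containing $f$, so the simplicial condition, which concerns two distinct circuits through $f$, holds vacuously.
\item If $S=\emptyset$, Step 1 applied to $D'$ supplies a simplicial vertex.
\end{enumerate}

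The delicate part is this bookkeeping: making sure the minimalization in contraction does not destroy the described form, and handling the interaction between singletons and hyperedges of $\H^{t}_{D'}$ that contain them.
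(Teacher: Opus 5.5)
Your proposal is correct and follows the paper's route: the absence of cycle-piercing edges makes every edge of $D$ simplicial, hence every vertex of $\tch$ simplicial by the preceding lemma, and the hypothesis is preserved under $D\mapsto D-e$ and $D\mapsto D_{\downarrow e}$. Your lifting of new cycles through the merged vertex to cycles of $D$ through $e$, and your Step~3 tracking of the singleton hyperedges created by contraction (including the degenerate empty-hyperedge case), spell out two points that the paper's proof leaves implicit.
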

\begin{proof}
We begin by showing that every edge of $D$ is simplicial.  
Afterward, we verify that both the deletion and the link-graph contraction of $D$
with respect to any edge $e \in E(D)$ yield multidigraphs that still
satisfy the same hypothesis as $D$.

Fix an edge $e \in E(D)$. First, assume there exists an edge
$f \in E(D)$ with $t(e)=t(f)$ and $s(e)\neq s(f)$. By the hypothesis, no directed cycle $D[\sigma]$ can contain $e$; otherwise, $f$ would be a cycle-piercing edge. Thus, the configuration required by the definition of a non-simplicial edge cannot arise, so $e$ must be a simplicial edge of $D$.

Now assume instead that $e$ lies on some directed cycle $D[\sigma]$.
Since $D$ contains no cycle-piercing edge, there is no edge $f \in E(D)$ with $t(e)=t(f)$ and $s(f)\neq s(e)$. Once again, the condition characterizing a non-simplicial edge cannot hold, so $e$ is simplicial. Hence, every edge of $D$ is simplicial.

Next, consider the multidigraph $D-e$. Any directed cycle in $D-e$
is also a directed cycle in $D$, since only the edge $e$ has been
removed. Therefore, if $D-e[\sigma]$ is a directed cycle and
$f \in E(D-e)$ satisfies $t(g)=t(f)$ and $s(f)\neq s(g)$ for some
edge $g\in \sigma$, then this same pair of edges already appears in $D$,
creating the same forbidden configuration in a directed cycle of $D$.
This contradicts the hypothesis on $D$. Hence, $D-e$ also satisfies
the same condition as $D$.

Finally, consider the multidigraph $D_{\downarrow e}$. By definition,
$D_{\downarrow e}$ is obtained from $D$ by deleting all edges that are
parallel to $e$, form a $2$-cycle with $e$, or have the same target as $e$,
and then identifying the vertices $s(e)$ and $t(e)$. Let
$D_{\downarrow e}[\sigma]$ be a directed cycle, and suppose there exists
an edge $f$ in $D_{\downarrow e}$ such that $t(g)=t(f)$ and
$s(f)\neq s(g)$ for some $g\in\sigma$. Since the construction of
$D_{\downarrow e}$ does not introduce new edges; the same pair of
edges already occurs in $D$, yielding the same configuration in a
directed cycle of $D$. This again contradicts the hypothesis on $D$.
Therefore, $D_{\downarrow e}$ also satisfies the required condition.
\end{proof}
\begin{example}

\begin{figure}[H]
\centering
\tikzset{vtx/.style={circle, fill=black, inner sep=1.5pt}}
    \begin{tikzpicture}
    \node[vtx] (a) at (0,0) {};
    \node[vtx] (c) at (2,-2) {};
    \node[vtx] (b) at (2,0) {};
    \node[vtx] (d) at (-2,0) {};
    \node[vtx] (e) at (4,0) {};
    \node[vtx] (f) at (4,-2) {};
    \node[vtx] (g) at (-4,0) {};
    \node[vtx] (h) at (-4,-2) {};

    \draw[->] (1.8,0)--(0.2,0) node[midway,above] {$e_1$};
    \draw[->] (-1.8,0)--(-0.2,0) node[midway,above] {$e_2$};
    \draw[->] (-4,-1.8) -- (-4,-0.2) node[midway,left] {$e_5$};
    \draw[->] (-3.8,0)--(-2.2,0) node[midway,above] {$e_3$};
    \draw[->] (-3.8,0.1)--(-2.2,0.1) node[midway,below] {$e_4$};
    \draw[->] (-2.2,-0.1)--(-3.8,-2) node[midway,above] {$e_6$};
    \draw[->] (3.8,0)--(2.2,0) node[midway,above] {$e_7$};
    \draw[->] (2,-0.2)--(2,-1.8) node[midway,left] {$e_8$};
    \draw[->] (2.2,-2) -- (3.8,-2) node[midway,below] {$e_9$};
    \draw[->] (2.2,-1.9) -- (3.8,-1.9) node[midway,above] {$e_{10}$};
    \draw[->] (4,-1.8) -- (4,-0.2) node[midway,right] {$e_{11}$};
    \end{tikzpicture}
    \caption{}\label{fig: TCH-thm-ex}
\end{figure}

Let $D$ be the multidigraph depicted in \Cref{fig: TCH-thm-ex}. Consider the directed cycle $D[\{e_3,e_5,e_6\}]$. For each edge $e \in \{e_5,e_6\}$, there is no $f \in E(D)$ with $t(e)=t(f)$. In contrast, for $e_3 \in \{e_3,e_5,e_6\}$, there exists some $f \in E(D)$ such that $t(e)=t(f)$ but $s(e)=s(f)$. Consequently, the directed cycle $D[\{e_3,e_5,e_6\}]$ satisfies the conditions of \Cref{thm: TCH-thm}. The other directed cycle in \Cref{fig: TCH-thm-ex} also satisfies the assumptions of \Cref{thm: TCH-thm}. Thus, $\DT(D)$ is shellable.
\end{example}

\subsection{Conflict hypergraph for the directed linear forest}

We now carry out an analogous construction for directed linear
forests. While the philosophy is identical, additional pairwise
conflicts must be incorporated to enforce both in-degree and
out-degree restrictions characteristic of directed paths.

\begin{definition}\label{def: conflict hypergraph for Dlf}
Let $D$ be a multidigraph. We associate to $D$ a hypergraph $\lfch$, called the \emph{linear-forest conflict hypergraph}, whose vertex set is $E(D)$, that is, $V(\lfch)=E(D)$. Its hyperedge set is the following set:

\[
\begin{aligned}
E(\lfch) = &\{\sigma \subseteq E(D) : D[\sigma] \text{ is a directed cycle}\} \;\cup\; \bigl\{\{e,f\} : e \text{ and } f \text{ are parallel edges}\bigr\}\\
& \;\cup\; \bigl\{\{e,f\} : s(e) = s(f) \text{~and~} t(e)\neq t(f)\bigr\} \;\cup\; \bigl\{\{e,f\} : s(e) \neq s(f) \text{~and~} t(e) = t(f)\bigr\}.
\end{aligned}
\]
\end{definition}

\begin{lemma}\label{Ind-for-H-2}
Let $D$ be a multidigraph. Then we have $\Dlf(D) = \Ind(\lfch)$.
\end{lemma}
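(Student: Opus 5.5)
The plan is a double inclusion, in the style of \Cref{Ind for H} and \Cref{Ind of Dlf}. The only new point compared with \Cref{Ind for H} is that the degree conditions now constrain out-degree as well as in-degree.

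\emph{($\subseteq$).} Let $\sigma \in \Dlf(D)$, so $D[\sigma]$ is a vertex-disjoint union of directed paths. We check each type of hyperedge of $\lfch$ in turn.
\begin{enumerate}
\item Every vertex of $D[\sigma]$ has in-degree and out-degree at most $1$. Hence $\sigma$ contains no pair $\{e,f\}$ with a common source, and no pair with a common target. This already excludes parallel pairs, as well as the two mixed types in \Cref{def: conflict hypergraph for Dlf}.
\item A directed path contains no directed cycle. Any $\tau \subseteq \sigma$ with $D[\tau]$ a directed cycle would be a directed cycle inside $D[\sigma]$, which is impossible.
\end{enumerate}
So $\sigma$ contains no hyperedge, and $\sigma \in \Ind(\lfch)$.

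\emph{($\supseteq$).} Let $\sigma$ be independent in $\lfch$.
\begin{enumerate}
\item First I show that every vertex of $D[\sigma]$ has out-degree at most $1$. Take two distinct edges $e,f\in\sigma$ with $s(e)=s(f)$. Either $t(e)=t(f)$, in which case they are parallel, or $t(e)\neq t(f)$. In both cases $\{e,f\}$ is a hyperedge, a contradiction.
\item The same argument with sources and targets swapped shows that every vertex has in-degree at most $1$.
\item Consequently each connected component of $D[\sigma]$ is either a directed path or a directed cycle. Here a $2$-cycle counts as a directed cycle on two vertices, and an isolated loop would also be a cycle if loops are allowed.
\item A component that is a directed cycle has edge set $\tau \subseteq \sigma$ with $D[\tau]$ a directed cycle. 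Then $\tau$ is a hyperedge contained in $\sigma$, a contradiction.
\end{enumerate}
Hence every component is a directed path, and $\sigma \in \Dlf(D)$.

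The only step needing care is the claim that a connected digraph with all in- and out-degrees at most $1$ is a directed path or a directed cycle. It follows from a standard walk argument: follow out-edges forward and in-edges backward from any vertex. Two further small points need checking. First, the $2$-cycle case must be covered by the directed-cycle hyperedges. It is, because $D[\{e,f\}]$ for a $2$-cycle is a directed cycle of length $2$ in the sense of the paper's definition. Second, the componentwise statement must match \Cref{def: DLFC}, which it does.
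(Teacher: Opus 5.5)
Your double-inclusion argument is correct and is essentially the proof the paper gives for the tree analogue \Cref{Ind for H}, with the out-degree condition added (the paper itself states this lemma without proof). You also correctly observe that $2$-cycles are excluded by the directed-cycle hyperedges rather than by any pairwise hyperedge; this is the one place where the argument differs from the graph case of \Cref{Ind of Dlf}.
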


\begin{lemma}
For the linear-forest conflict hypergraph, the following identities hold:
\begin{enumerate}
    \item $\lfch \setminus \{e\} =  \H_{D-e}^{\mathrm{lf}}$
    \item $\lfch / e = \H_{D \lnkate e}^{\mathrm{lf}} \cup \big\{f : \{e,f\} \in E(\lfch) \big\}$.
\end{enumerate}
\end{lemma}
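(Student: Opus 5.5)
The plan is to mirror the proof of the corresponding lemma for $\tch$ and compare hyperedges directly, one operation at a time. Throughout, I use the description of hyperedges of $\lfch$ from \Cref{def: conflict hypergraph for Dlf}: directed cycles of $D$ (including $2$-cycles), together with the conflicting pairs. The conflicting pairs are the parallel pairs and the pairs sharing exactly one of source or target.

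For (1), the vertex sets agree, since $V(\lfch\setminus\{e\})=E(D)\setminus\{e\}=E(D-e)$. A set $\sigma\subseteq E(D)\setminus\{e\}$ is a hyperedge of $\lfch\setminus\{e\}$ exactly when $\sigma$ is a hyperedge of $\lfch$ avoiding $e$. Whether $D[\sigma]$ is a directed cycle, and whether two edges are parallel or share a source or target, depend only on the sources and targets of the edges in $\sigma$. These are unchanged in $D-e$, so the hyperedges coincide with those of $\H^{\mathrm{lf}}_{D-e}$.

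For (2), the hyperedges of $\lfch/e$ are the minimal sets among $\{\tau\setminus\{e\}:\tau\in E(\lfch)\}$, where the sets $\tau$ not containing $e$ contribute themselves. I split into three steps.
\begin{enumerate}
\item Each conflicting pair $\{e,f\}$ contracts to the singleton $\{f\}$. These are exactly the edges $f$ sharing a source or target with $e$, or parallel to it. Adding the reverse edges of $e$ (the $2$-cycles $\{e,f\}$), these are precisely the edges deleted when forming $D\lnkate e$.
\item Every hyperedge $\tau$ meeting one of these singletons is absorbed by minimality. The surviving hyperedges therefore live on $E(D\lnkate e)$.
\item I show that the surviving hyperedges are exactly those of $\H^{\mathrm{lf}}_{D\lnkate e}$. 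A hyperedge avoiding $e$ and all deleted edges is a conflicting pair or a directed cycle in $D$ on surviving edges. It remains one in $D\lnkate e$, since identifying $s(e)$ and $t(e)$ creates no new incidences once those vertices are isolated. The genuinely new case is a directed cycle $\tau\ni e$ of length at least $3$ all of whose other edges survive. I would argue that this is impossible: the cycle edge entering $s(e)$ would survive, but the cycle edge leaving $t(e)$ has source $t(e)$. That edge is removed in $D\lnkate e$, because after deleting the edges at $s(e)$ and $t(e)$ the identified vertex is isolated. Conversely, a directed cycle or conflict in $D\lnkate e$ lifts back verbatim to $D$.
\end{enumerate}

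The main obstacle is this last matching, and in particular the precise reading of the construction of $D\lnkate e$. There is an asymmetry here: $D\lnkate e$ deletes edges with source $s(e)$ or target $t(e)$, but not edges with target $s(e)$ or source $t(e)$. Yet \Cref{lem:lk and del for Dlf} asserts that the identified vertex is isolated. I would first check carefully which edges at $s(e)$ and $t(e)$ survive. If edges into $s(e)$ or out of $t(e)$ do survive, then a cycle $\tau\ni e$ contracts to a directed path through the identified vertex, which is not a cycle in $D\lnkate e$, and the extra hyperedge $\tau\setminus\{e\}$ must be accounted for. In that case I would restate the identity modulo these extra hyperedges, or verify that the two sides still have the same independence complex via \Cref{lem:lk and del for Dlf} and \Cref{Ind-for-H-2}. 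That weaker identity is all that $W$-chordality arguments analogous to \Cref{thm: TCH-thm} need.
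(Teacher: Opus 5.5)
\textbf{There is a genuine gap.} It sits in step 3 of your treatment of (2), which is exactly where the identity has content. You claim that the edge $g$ of a cycle $\tau\ni e$ of length at least $3$ that leaves $t(e)$ is removed in $D\lnkate e$. This is false. The construction deletes only edges with source $s(e)$, target $t(e)$, or endpoint set $\{s(e),t(e)\}$, and $g$ is none of these. Nor is $\{g\}$ one of the singletons, since $g$ does not conflict with $e$.

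The word ``isolated'' in the definition is loose. The accompanying figure has $e=\overrightarrow{14}$, and there $\overrightarrow{42}$ and $\overrightarrow{43}$ survive as edges out of the merged vertex. So the merged vertex $w$ inherits the edges into $s(e)$ and the edges out of $t(e)$. This is forced anyway by $\lk_{\Dlf(D)}(e)=\Dlf(D\lnkate e)$, since a face $\sigma\cup\{e\}$ may contain edges into $s(e)$ and out of $t(e)$.

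Your later diagnosis is then also wrong. $\tau\setminus\{e\}$ is a directed path from $t(e)$ to $s(e)$, and its two ends are precisely the identified pair. So in $D\lnkate e$ it closes up into a directed cycle through $w$, not a path; when $\tau$ has length $3$ this is a $2$-cycle $w\to x\to w$. These hyperedges are therefore cycle hyperedges of $\H^{\mathrm{lf}}_{D\lnkate e}$, the identity holds as stated, and no restatement ``modulo extra hyperedges'' is needed.

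Your converse, that cycles of $D\lnkate e$ lift verbatim to $D$, fails for the same reason on cycles through $w$. Such a cycle enters $w$ along an edge that entered $s(e)$ in $D$, and leaves along one that left $t(e)$, because edges out of $s(e)$ and into $t(e)$ are gone. So it lifts to a $t(e)$--$s(e)$ path, i.e.\ to $\tau\setminus\{e\}$ where $\tau$ is the cycle obtained by adding $e$.

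\textbf{What the direct comparison still needs.} The paper states this lemma without proof; its proof of the analogous identity for $\tch$ is the same hyperedge comparison. You need three further facts.
\begin{enumerate}
\item $\tau\setminus\{e\}$ is minimal in $\lfch/e$. Its first edge has source $t(e)$ and target different from $s(e)$, its last edge has target $s(e)$ and source different from $t(e)$, and the others avoid $s(e),t(e)$, so none conflicts with $e$. Being a directed path, it also contains no conflicting pair, no cycle, and no other $t(e)$--$s(e)$ path.
\item A surviving cycle of $D$ cannot pass through $s(e)$ or $t(e)$. So it is untouched by the identification and not absorbed.
\item Merging creates no new common source, common target, or parallel pair. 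The reason is not that $s(e),t(e)$ become isolated, but that the surviving edges at $s(e)$ all point in and those at $t(e)$ all point out. The only new hyperedges are the cycles through $w$.
\end{enumerate}

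\textbf{Your fallback is not actually weaker.} Both sides of (2) are clutters on $E(D)\setminus\{e\}$. A clutter is recovered as the set of minimal non-faces of its independence complex, and $\Ind(\lfch/e)=\lk_{\Ind(\lfch)}(e)$. So $\lk_{\Dlf(D)}(e)=\Dlf(D\lnkate e)$, together with $\Dlf=\Ind(\H^{\mathrm{lf}})$ applied to $D$ and to $D\lnkate e$, yields (2) exactly. As written, though, you offer this only as a substitute for a weakened statement, so the proposal does not prove the lemma.
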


The above results show that the directed linear forest complex,
like the directed tree complex, can be realized as the independence
complex of an associated conflict hypergraph. Moreover, deletion
and contraction in $\lfch$ correspond to natural operations on the multidigraph. Thus, the hypergraph framework provides a
uniform way to study directed linear forests using the language of
independence complexes and $W$-chordality.

\section{Concluding Remarks}\label{lastsection}
The results of this paper suggest several directions for further investigation, both on the combinatorial structure of directed forest-type complexes and on the interaction between directed graphs, hypergraph chordality, and topological properties of independence complexes.

\subsection*{Conflict graph and hypergraph realizability}
Throughout this paper, topological properties of directed linear forest and directed tree complexes are derived from structural properties of their associated conflict graphs and hypergraphs. 
This naturally raises a representation-theoretic question: which graphs and hypergraphs can arise as conflict objects of directed graphs?

\begin{problem}
Characterize the simple graphs that arise as linear-forest conflict graphs, tree conflict graphs for some multidigraph. 
Analogously, characterize the hypergraphs that arise as conflict hypergraphs. 
Are there forbidden induced subgraphs that cannot occur in conflict (hyper)graphs?
\end{problem}

These questions are analogous to classical realizability problems for line graphs, comparability graphs, and circuit hypergraphs. 
A structural characterization—possibly in terms of forbidden induced subgraphs or minors—would clarify the scope of the conflict-based approach and could lead to intrinsic criteria for shellability or vertex decomposability independent of the underlying directed graph.
Here is a list of conditions that more or less directly follow from the results of \Cref{sec:DLF}. 
\begin{enumerate}
    \item Every linear-forest conflict graph admits a clique cover indexed by the vertices of the associated multidigraph. Moreover, in any linear-forest conflict graph, the neighborhood of every vertex can be covered by at most three cliques.
    \item No linear-forest conflict graph contains an induced claw $K_{1, 3}$ in which the three leaves correspond to edges with pairwise distinct sources and targets. This follows from the fact that a center edge can only conflict with three others if at least two conflicts come from the same source or target, forcing adjacency among leaves.
    \item Every complete bipartite graph $K_{m,n}$ arises as a linear-forest conflict graph. Construct a multidigraph such that $m$ edges sharing a common source and $n$ edges sharing a common target, with no overlaps otherwise.
\end{enumerate}

\subsection*{Characterizing minimal forbidden configurations}
Our vertex decomposability results for directed linear forest complexes and directed tree complexes are obtained via the absence of certain induced configurations, such as alternating closed trails and directed cycles. 
A natural next step is to obtain a complete characterization of minimal forbidden induced subgraphs for which these complexes fail to be vertex decomposable or shellable.
\begin{problem}
Classify all minimal multidigraphs $D$ such that $\Dlf$ (and respectively $\DT$) is not vertex decomposable, but becomes vertex decomposable upon deletion of any edge.
\end{problem}

\subsection*{Homotopy types and finer invariants}
Although vertex decomposability and shellability provide strong combinatorial control, they do not fully determine the homotopy type of the associated complexes. 
In many examples, directed forest complexes exhibit rich topological behavior beyond wedge-of-spheres phenomena.
\begin{problem}
    Describe the homotopy types or homology groups of $\Dlf$ and $\DT$ for broader classes of multidigraphs, particularly in the presence of directed cycles.
\end{problem}

\subsection*{Algorithmic and enumerative questions}
The conflict graph and hypergraph constructions suggest algorithmic approaches to detecting vertex decomposability or shellability.
\begin{problem}
    Develop efficient algorithms to test vertex decomposability or shellability of these complexes via their conflict graphs or hypergraphs, and study the computational complexity of these problems.
\end{problem}

\nocite{*}
\bibliographystyle{abbrv}
\bibliography{references} 

@article{ATCF05,
author = {Francisco, Christopher and Tuyl, Adam},
year = {2005},
month = {12},
pages = {},
title = {Sequentially {C}ohen-{M}acaulay Edge Ideals},
volume = {135},
journal = {Proceedings of the American Mathematical Society},
doi = {10.2307/20534833}
}

@article{Woodroofe09,
   title={Vertex decomposable graphs and obstructions to shellability},
   volume={137},
   ISSN={0002-9939},
   url={http://dx.doi.org/10.1090/S0002-9939-09-09981-X},
   DOI={10.1090/s0002-9939-09-09981-x},
   number={10},
   journal={Proceedings of the American Mathematical Society},
   publisher={American Mathematical Society (AMS)},
   author={Woodroofe, Russ},
   year={2009},
   month=oct, pages={3235–3235} }

@article{Dusko13,
author = {Jojic, Dusko},
year = {2013},
month = {01},
pages = {1551-1559},
title = {Shellability of complexes of directed trees},
volume = {27},
journal = {Filomat},
doi = {10.2298/FIL1308551J}
}

@article{Engstrom05,
title = {Complexes of directed trees and independence complexes},
journal = {Discrete Mathematics},
volume = {309},
number = {10},
pages = {3299-3309},
year = {2009},
issn = {0012-365X},
doi = {https://doi.org/10.1016/j.disc.2008.09.033},
url = {https://www.sciencedirect.com/science/article/pii/S0012365X08005797},
author = {Alexander Engström},
}

@article{Ducko12,
author = {Jojic, Dusko},
year = {2012},
month = {01},
pages = {241-},
title = {Complexes of directed trees of complete multipartite graphs},
volume = {100},
journal = {Publications de l Institut Mathematique},
doi = {10.2298/PIM1206043J}
}

@article{kozlov99,
  title={Complexes of directed trees},
  author={Kozlov, Dmitry N},
  journal={Journal of Combinatorial Theory, Series A},
  volume={88},
  number={1},
  pages={112--122},
  year={1999},
  publisher={Elsevier}
}

@article {anurag21,
author = {Singh, Anurag},
title = {Vertex decomposability of complexes associated to forests},
journal = {Transactions on Combinatorics},
volume = {11},
number = {1},
pages = {1-13},
year  = {2022},
publisher = {University of Isfahan},
issn = {2251-8657}, 
eissn = {2251-8665}, 
doi = {10.22108/toc.2021.127059.1809}
}

@article{marietti08,
  title={A uniform approach to complexes arising from forests},
  author={Marietti, Mario and Testa, Damiano},
  journal={the electronic journal of combinatorics},
  pages={R101--R101},
  year={2008}
}

@article{JJ08,
author = {Jonsson, Jakob},
year = {2008},
month = {01},
pages = {},
title = {Simplicial Complexes of Graphs},
volume = {1928},
isbn = {978-3-540-75858-7},
journal = {Lecture Notes in Mathematics},
doi = {10.1007/978-3-540-75859-4}
}

@article{VANTUYL08,
title = {Shellable graphs and sequentially Cohen–Macaulay bipartite graphs},
journal = {Journal of Combinatorial Theory, Series A},
volume = {115},
number = {5},
pages = {799-814},
year = {2008},
issn = {0097-3165},
doi = {https://doi.org/10.1016/j.jcta.2007.11.001},
author = {Adam {Van Tuyl} and Rafael H. Villarreal}
}

@article{EHRENBORG06,
title = {The topology of the independence complex},
journal = {European Journal of Combinatorics},
volume = {27},
number = {6},
pages = {906-923},
year = {2006},
issn = {0195-6698},
doi = {https://doi.org/10.1016/j.ejc.2005.04.010},
author = {Richard Ehrenborg and Gábor Hetyei},
}

@book{kozlov2007,
  title={Combinatorial Algebraic Topology},
  author={Kozlov, D.},
  isbn={9783540719625},
  lccn={2007933072},
  series={Algorithms and Computation in Mathematics},
  url={https://books.google.co.in/books?id=BfBHAAAAQBAJ},
  year={2007},
  publisher={Springer Berlin Heidelberg}
}

@article {ProBill1980,
    AUTHOR = {Provan, J. Scott and Billera, Louis J.},
     TITLE = {Decompositions of simplicial complexes related to diameters of
              convex polyhedra},
   JOURNAL = {Math. Oper. Res.},
  FJOURNAL = {Mathematics of Operations Research},
    VOLUME = {5},
      YEAR = {1980},
    NUMBER = {4},
     PAGES = {576--594},
      ISSN = {0364-765X,1526-5471},
   MRCLASS = {52A25 (90C05)},
  MRNUMBER = {593648},
MRREVIEWER = {J.\ Parida},
       DOI = {10.1287/moor.5.4.576},
       URL = {https://doi.org/10.1287/moor.5.4.576},
}

@article{CaputiCollariDiTrani2022,
  author    = {Luca Caputi and Carlo Collari and Sabino Di Trani},
  title     = {Combinatorial and topological aspects of path posets, and multipath cohomology},
  journal   = {Journal of Algebraic Combinatorics},
  year      = {2022},
  volume    = {56},
  doi       = {10.1007/s10801-022-011xx-x},
}

@article{CaputiCollariDiTrani2024,
  author  = {Luca Caputi and Carlo Collari and Sabino Di Trani},
  title   = {Multipath cohomology of directed graphs},
  journal = {Algebraic \& Geometric Topology},
  year    = {2024},
  volume  = {24},
}

@misc{CaputiCollariTraniSmith2022,
      title={On the homotopy type of multipath complexes}, 
      author={Luigi Caputi and Carlo Collari and Sabino Di Trani and Jason P. Smith},
      year={arXiv:2208.04656},
      eprint={2208.04656},
      archivePrefix={arXiv},
      primaryClass={math.CO},
      journal={},
      url={https://arxiv.org/abs/2208.04656}, 
}

@misc{CaputiCollariSmith2026,
      title={Multipath complexes of bidirectional polygonal digraphs}, 
      author={Luigi Caputi and Carlo Collari and Jason P. Smith},
      year={arXiv:2601.05670},
      eprint={2601.05670},
      archivePrefix={arXiv},
      primaryClass={math.CO},
      journal={},
      url={https://arxiv.org/abs/2601.05670}, 
}

@article{RW11,
author = {Woodroofe, Russ},
year = {2011},
month = {10},
title = {Chordal and Sequentially Cohen-Macaulay Clutters},
volume = {18},
journal = {The Electronic Journal of Combinatorics},
doi = {10.37236/695}
}
	
\end{document}